\newtheorem{thm}{Theorem}[section]
\newtheorem{cor}[thm]{Corollary}
\newtheorem{lem}[thm]{Lemma}
\newtheorem{prop}[thm]{Proposition}
\newtheorem{theoremalpha}{Theorem}
\theoremstyle{definition}
\theoremstyle{remark}
\newtheorem{rem}[thm]{Remark}
\numberwithin{equation}{section}
\begin{document}
\title[Splitting method of nonlinear heat equation]
{Convergence analysis of the splitting method to the nonlinear heat equation}

\author[Choi]{Hyung Jun Choi}
\address{School of Liberal Arts, Korea University of Technology and Education, Cheonan 31253, Republic of Korea}
\email{hjchoi@koreatech.ac.kr; choihjs@gmail.com}

\author[Choi]{Woocheol Choi}
\address{Department of Mathematics, Sungkyunkwan University, Suwon 16419, Republic of Korea}
\email{choiwc@skku.edu}

\author[Koh]{Youngwoo Koh}
\address{Department of Mathematics Education, Kongju National University, Kongju 32588, Republic
of Korea}
\email{ywkoh@kongju.ac.kr}

\subjclass[2010]{Primary 35Q55, 65M15.}
\keywords{Nonlinear heat equation, Splitting method}
\maketitle

\begin{abstract}
In this paper, we analyze an operator splitting scheme of the nonlinear heat equation in $\Omega\subset\mathbb{R}^d$ ($d\geq 1$):
    $$
    \left\{\begin{aligned}
    &~\partial_t u = \Delta u + \lambda |u|^{p-1} u\quad\mbox{ in }\Omega\times(0,\infty),\\
    &~u=0\quad\mbox{ in }\partial\Omega\times(0,\infty),\qquad u ({\bf x},0) =\phi ({\bf x})\quad\mbox{ in }\Omega,
    \end{aligned}\right.
    $$
where $\lambda\in\{-1,1\}$ and $\phi \in W^{1,q}(\Omega)\cap L^{\infty} (\Omega)$ with $2\leq p < \infty$ and $d(p-1)/2<q<\infty$.
We establish the well-posedness of the approximation of $u$ in $L^r$-space ($r\geq q$), and furthermore, we derive its convergence rate of order $\mathcal{O}(\tau)$ for a time step $\tau>0$.
Finally, we give some numerical examples to confirm the reliability of the analyzed result 
\end{abstract}


\section{Introduction}\label{sec1}

Let $\Omega$ be any domain in $\mathbb{R}^{d}$, $d\geq1$.
Our concerned nonlinear heat equation is
    \begin{equation}\label{eq-main}
    \left\{\begin{aligned}
    &~\partial_t u = \Delta u + \lambda|u|^{p-1} u&&\mbox{ in }\Omega\times (0,\infty), \\
    &~u =0&&\mbox{ on }\partial\Omega\times (0,\infty), \\
    &~u({\bf x},0)= \phi ({\bf x})&&\mbox{ for }{\bf x} \in \Omega,
    \end{aligned}\right.
    \end{equation}
where $1<p<\infty$, $\lambda\in\{-1,1\}$ and $\phi({\bf x})$ is an initial temperature function.
So far, various mathematical issues of the nonlinear heat equation have been investigated.
Weissler \cite{W,WS} studied the local existence and uniqueness of solutions to the integral formula, where the initial datum $\phi$ is assumed to belong to $L^q(\Omega)$ for some $1\leq q<\infty$, and furthermore, the author \cite{WE} conducted research of the existence and non-existence of global solutions. Giga \cite{GS} showed a unique local regular solution in $L^r(0,T;L^s(\Omega))$, where $r$ and $s$ have the relation $1/r=(1/q-1/s)\,d/2$, $s>q$, provided that the initial function is in $L^q(\Omega)$ with $q=d(p-1)/2>1$, where $d$ is the space dimension. Brezis and Cazenave \cite{BC} discussed the local existence and uniqueness of solutions on a maximal interval $[0,T_{\rm max})$ with the $L^q(\Omega)$ ($1\leq q<\infty$) initial data, provided either $q>d(p-1)/2$ or $q=d(p-1)/2>1$. Gruji\'{c} and Kukavica \cite{GKS} considered the space-anlyticity radius of solutions in a smooth bounded domain on the initial condition of $L^q(\Omega)$, $1\leq q<\infty$. Ni and Sacks \cite{NSS} established the nonuniqueness and nonregularizing effect in some critical cases. We also refer to several literatures \cite{HY,CMR,GW,MM,MS} which are for the global existence, well-posedness and blow-up solutions.

We state the well-posedness of the solution $u$ to the problem \eqref{eq-main}, which is shown in \cite{BC, W} (also refer to \cite[Theorem 15.2]{QS}).

\begin{theoremalpha}\label{QS-thm}
Let $p>1$, $q\geq1$, $\frac{d(p-1)}{2} < q <\infty$ and $r \in [q, \infty]$.
If we assume that $\phi\in L^q(\Omega)$, then there exists a time $T_0>0$ such that the problem \eqref{eq-main} has a unique classical $L^r$-solution in $[0,T_0)$ and the following estimate holds:
    \begin{equation}\label{NLH_basic}
    \sup_{t\in[0,T_0)} t^{\frac{d}{2}\left(\frac{1}{q}-\frac{1}{r}\right)} \| u(t) \|_{L^{r}(\Omega)}
    \leq C_{d,p,q} \|\phi\|_{L^q(\Omega)},
    \end{equation}
where $C_{d,p,q}>0$ is a constant independent of the domain $\Omega$.
Furthermore, the time $T_0 =T_0\left(d,p,q,\|\phi\|_{L^q(\Omega)}\right)$ in \eqref{NLH_basic} can be precisely determined by
    \begin{equation}\label{def_T0}
    T_0
    = c_{d,p,q} \left( 1/~ \|\phi\|_{L^q(\Omega)}^{(p-1)q} \right)^{\frac{1}{q - \frac{d(p-1)}{2}} }
    \end{equation}
for some positive constant $c_{d,p,q}>0$.
\end{theoremalpha}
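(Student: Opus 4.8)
The plan is to recast \eqref{eq-main} as the Duhamel (mild) integral equation
$$u(t) = e^{t\Delta}\phi + \lambda\int_0^t e^{(t-s)\Delta}\bigl(|u(s)|^{p-1}u(s)\bigr)\,ds,$$
where $e^{t\Delta}$ denotes the Dirichlet heat semigroup on $\Omega$, and to solve it by a contraction mapping argument in a time-weighted space. First I would record the smoothing estimate $\|e^{t\Delta}f\|_{L^r(\Omega)} \le C_{d}\, t^{-\frac d2(\frac1\rho-\frac1r)}\|f\|_{L^\rho(\Omega)}$ for $1\le\rho\le r\le\infty$; because the Dirichlet heat kernel is pointwise dominated by the Gaussian kernel on $\mr^d$, the constant here is independent of $\Omega$, which is what delivers the domain-independence of $C_{d,p,q}$ in \eqref{NLH_basic}. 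Fixing an exponent $r$ and writing $\alpha=\frac d2(\frac1q-\frac1r)$, I would work in the complete metric space
$$X_T=\Bigl\{u:\ \|u\|_{X_T}:=\sup_{0<t<T} t^{\alpha}\|u(t)\|_{L^r(\Omega)}<\infty\Bigr\}$$
and show that the right-hand side above defines a contraction on a small ball of $X_T$.

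Next I would estimate the two terms. The linear term obeys $t^\alpha\|e^{t\Delta}\phi\|_{L^r}\le C_d\|\phi\|_{L^q}$ directly from the smoothing estimate. For the nonlinear term, since $|u|^{p-1}u\in L^{r/p}$ whenever $u\in L^r$, the $L^{r/p}\!\to\!L^r$ smoothing estimate gives
$$t^{\alpha}\Bigl\|\int_0^t e^{(t-s)\Delta}\bigl(|u|^{p-1}u\bigr)(s)\,ds\Bigr\|_{L^r} \le C\,t^{\alpha}\int_0^t (t-s)^{-\frac{d(p-1)}{2r}}\|u(s)\|_{L^r}^{p}\,ds.$$
Inserting $\|u(s)\|_{L^r}\le \|u\|_{X_T}\,s^{-\alpha}$ and evaluating the resulting Beta integral $\int_0^t(t-s)^{-a}s^{-b}\,ds=t^{1-a-b}B(1-a,1-b)$, with $a=\frac{d(p-1)}{2r}$ and $b=p\alpha$, the exponent on $t$ collapses to
$$\alpha+1-a-b=1-\tfrac{d(p-1)}{2q}=:\theta,$$
\emph{independently of the choice of $r$}. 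The two convergence conditions $a<1$ and $b<1$ for the Beta integral translate into $r>\frac{d(p-1)}{2}$ (automatic from $r\ge q>\frac{d(p-1)}{2}$) and a mild upper restriction on $r$, so I would fix $r$ within this admissible window, which is nonempty since it contains a neighborhood of $q$. Crucially, the subcriticality hypothesis $q>\frac{d(p-1)}{2}$ is exactly what makes $\theta>0$.

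With $\theta>0$ in hand, the nonlinear term is bounded by $C\,T^{\theta}\|u\|_{X_T}^p$, so the map sends the ball of radius $M:=2C_d\|\phi\|_{L^q}$ into itself provided $C\,T^{\theta}M^{p-1}\le\frac12$; the analogous Lipschitz estimate, using $\bigl||a|^{p-1}a-|b|^{p-1}b\bigr|\le C(|a|^{p-1}+|b|^{p-1})|a-b|$ together with H\"older's inequality, yields the contraction property under the same smallness condition. Solving $C\,T^{\theta}M^{p-1}\le\frac12$ for the threshold gives $T_0\simeq \|\phi\|_{L^q}^{-(p-1)/\theta}$, and since $\frac{p-1}{\theta}=\frac{(p-1)q}{q-\frac{d(p-1)}{2}}$ this is precisely the form \eqref{def_T0}; the contraction fixed point is the unique $L^r$-solution and satisfies \eqref{NLH_basic} for this $r$.

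To obtain \eqref{NLH_basic} for every $r\in[q,\infty]$ I would feed the constructed solution back into the Duhamel formula and apply the smoothing estimate once more from the solved exponent to an arbitrary $r$ (the case $r=q$ following from a uniform $L^q$-bound and $r=\infty$ from the finite-time $L^{r_0}\!\to\!L^\infty$ smoothing), and standard parabolic bootstrapping upgrades the mild solution to a classical one. The main obstacle is the bookkeeping in the nonlinear estimate: verifying that the $t$-exponent cancels to the $r$-independent value $\theta$ and that the admissible window for $r$ is nonempty, and then tracking the constants carefully enough to extract the sharp scaling form \eqref{def_T0} of $T_0$ rather than a merely qualitative existence time.
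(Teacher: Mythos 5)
The paper does not actually prove Theorem A --- it is quoted from Brezis--Cazenave, Weissler and \cite[Theorem 15.2]{QS} --- and your weighted-in-time contraction argument is precisely the standard proof given in those references, with the key bookkeeping done correctly: the exponent on $t$ collapses to the $r$-independent value $\theta=1-\frac{d(p-1)}{2q}>0$ exactly because $q>\frac{d(p-1)}{2}$, and solving $CT^{\theta}M^{p-1}\leq\frac12$ reproduces the scaling \eqref{def_T0}. The one point needing slightly more care is the passage to $r=\infty$: when $q\leq dp/2$ a single application of the $L^{r_0/p}\to L^{\infty}$ smoothing estimate produces a divergent Beta integral, so the bootstrap must be iterated finitely many times, each step raising the admissible exponent $r$ by a fixed amount --- a routine modification that does not affect the conclusion.
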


\vspace{0.3cm}

In this paper, we are concerned with the operator splitting scheme of the nonlinear heat equation \eqref{eq-main}, which is regarded as a fundamental problem consisting of the diffusion part and the nonlinear reaction part. Such numerical method is useful in the numerical computation of the semilinear-type problem, and it can be proposed by splitting \eqref{eq-main} into a linear flow and a nonlinear one as follows (cf. \cite{GC}):

\vspace{0.2cm}

\begin{tcolorbox}
\noindent{\bf (Linear Part)}
    Let $v_0({\bf x})\in\mathbb{R}$ be a given function.
    For $t>0$, the operator $S(t)$ is defined by $S(t)v_0 = e^{t\Delta}v_0$ which denotes the solution $v$ satisfying the following linear heat propagation:
    $$
    \left\{ \begin{aligned}  &\partial_t v  = \Delta v && \mbox{ in }\Omega\times(0,\infty),\\
    &v=0&&\mbox{ on }\partial\Omega\times(0,\infty),\\
    &v({\bf x},0)= v_0({\bf x})&&\mbox{ for }{\bf x}\in  \Omega.
    \end{aligned}
    \right.
    $$

\noindent{\bf (Nonlinear Part)}
    For a bounded function $w_0({\bf x})\in\mathbb{R}$, there exists a time $T_1>0$ and a unique solution $w$ satisfying
    \begin{equation}\label{NL-flow}
    \left\{ \begin{aligned}
    &\partial_t w = \lambda |w|^{p-1} w&&\mbox{ in }\Omega\times(0,T_1),\\
    &w=0&&\mbox{ on }\partial\Omega\times(0,T_1),\\
    &w({\bf x},0)= w_0({\bf x}) &&\mbox{ for }{\bf x}\in\Omega.
    \end{aligned}
    \right.
    \end{equation}
    The operator $N(t)$ is defined by $N(t)w_0=w$, where $w$ is the solution of \eqref{NL-flow}.
    Indeed, the explicit form of $w$ in \eqref{NL-flow} can be expressed by
    \begin{equation}\label{NL-flow_2}
    N(t)w_0
    = w_0({\bf x}) \left( \frac{1}{1-(p-1)\lambda t|w_0({\bf x})|^{p-1}} \right)^{\frac{1}{p-1}}\quad\mbox{ for }t\in(0,T_1).
    \end{equation}
    Here, the time $T_1 =T_1\left(p,\lambda,\|w_0\|_{L^\infty(\Omega)}\right)  >0$ in \eqref{NL-flow} can be determined by
    \begin{equation}\label{def_T1}
    T_1 = \left( (p-1)\left|\lambda\right| \|w_0\|_{L^{\infty}(\Omega)}^{p-1} \right)^{-1}.
    \end{equation}
\end{tcolorbox}

\vspace{0.3cm}

Using the operators $N(t)$ and $S(t)$, the solution $u$ of \eqref{eq-main} is separated with a small switching time $\tau\ll T_1$ as follows: On a fixed time interval $[0,T_1)$, we define the Lie approximation $Z(n\tau)$ for $n\in\mathbb{N}$ and $0<n\tau<T_1$ by
    \begin{equation}\label{operator-splitting-approximation}
    Z(n \tau) \phi = (S(\tau) N(\tau))^n \phi.
    \end{equation}
We note that $Z(n \tau) \phi$ is well-defined for $n\in\mathbb{N}$ satisfying $0<n\tau<T_1$ (see Proposition \ref{prop_def_N} later).
Furthermore, the Duhamel-type formula for $Z$ is given by
    \begin{equation}\label{eq-1-9}
    Z(n \tau) = S(n \tau) + \tau \sum_{k=0}^{n-1} S(n\tau -k \tau)\left( \frac{N(\tau) - I}{\tau}\right)
    Z(k \tau),
    \end{equation}
where $I$ denotes the identity operator.
On the other hand, the Duhamel formula of the solution $u$ to \eqref{eq-main} is obtained by
    \begin{equation}\label{eq-1-8}
    u(t) = S(t) \phi + \int_0^t S(t-s) \left( \lambda |u(s)|^{p-1} u (s) \right) ds,\qquad t \geq 0.
    \end{equation}

The aim of this paper is to establish a convergence analysis of the approximation $Z(n\tau)\phi$ defined by \eqref{operator-splitting-approximation}, which is depending on the regularity of the initial datum $\phi$.
Regarding the concerned problem \eqref{eq-main}, various numerical strategies and numerically analyzed results have been investigated in numerous literatures.
Mizuguchi et al. \cite{MTKO1,MTKO2} verified the existence and local uniqueness of mild solutions, and demonstrated a global-in-time solution for a certain semilinear parabolic equation.
Kyza and Metcalfe \cite{KMP} considered an adaptive space-time numerical approach, based on a rigorous a posteriori error bound, with a general local Lipschitz reaction term whose solution may blow up in finite time.
Furthermore, the splitting methods of other equations such as the reaction diffusion equation and the nonlinear Schr\"{o}dinger equation have been studied in numerous literatures \cite{BCCM, CCDV, L} and other references therein.
Recently, many authors analyzed convergence results of the Schr\"{o}dinger equation with the initial datum in low-regularity spaces (cf. \cite{BBD, CK, I2, IZ1, IZ2, I}).
In addition, regarding the nonlinear parabolic problem, some convergence results of the approximation based on the proposed splitting scheme can be found in the references \cite{BCCM, CCDV, D, DDDLLM, DR, RS}.

From now on, we give two main results, i.e., the well-posedness of $Z(n\tau)\phi$ in \eqref{operator-splitting-approximation} and its error estimate.
To compare with $u(n\tau)$ in Theorem \ref{QS-thm}, we need to consider the common time interval $(0,T_2)$, where $T_2=T_2\left(d,p,q,\lambda, \|\phi\|_{L^q(\Omega)}, \|\phi\|_{L^\infty(\Omega)}\right)>0$ is defined by
    \begin{equation}\label{def_T2}
    T_2 = \min \{T_0,\, T_1\}\in(0,\infty),
    \end{equation}
where $T_0$ and $T_1$ are given by \eqref{def_T0} and \eqref{def_T1}, respectively.
As shown in Appendix later, the well-posedness of the approximation $Z(n\tau)\phi$ on the interval $(0,T_2)$ is stated as follows:

\begin{thm}\label{main_thm_2}
Let $p>1$, $q\geq1$, $\frac{d(p-1)}{2}<q<\infty$ and $r\in[q,\infty]$.
If we assume that $\phi \in L^{q}(\Omega) \cap L^{\infty}(\Omega)$, then there exists a constant $C_{d,p,q}>0$, independent of the domain $\Omega$, such that
    \begin{equation}\label{loc_wp}
    \sup_{0< n\tau <T_2} (n\tau)^{\frac{d}{2} \left(\frac{1}{q}-\frac{1}{r}\right)} \|Z(n\tau)\phi\|_{L^r(\Omega)} \leq
    C_{d,p,q} \|\phi\|_{L^q(\Omega)},
    \end{equation}
where $\tau \in (0, T_2/2]$ is a given small switching time and $T_2>0$ is given by \eqref{def_T2}.
\end{thm}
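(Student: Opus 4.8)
The plan is to run a discrete analogue of the Weissler--Brezis--Cazenave fixed-point argument directly on the Duhamel-type identity \eqref{eq-1-9}, with the discrete sum playing the role of the Duhamel integral in \eqref{eq-1-8}. Two ingredients drive the whole estimate. First, the Dirichlet heat semigroup obeys the smoothing bound $\|S(t)f\|_{L^r(\Omega)} \le C_d\, t^{-\frac d2(\frac1a-\frac1r)}\|f\|_{L^a(\Omega)}$ for $1\le a\le r\le\infty$, with $C_d$ independent of $\Omega$ (the Dirichlet kernel is dominated by the Gaussian), and $S(t)$ is an $L^a$-contraction for every $a$. Second, from the explicit formula \eqref{NL-flow_2} and the a priori $L^\infty$-control supplied by Proposition \ref{prop_def_N}, the increment operator satisfies the pointwise bound $\bigl|\tfrac{N(\tau)-I}{\tau}w\bigr|\le C_{p}\,|w|^{p}$ whenever $(p-1)|\lambda|\tau\|w\|_{L^\infty(\Omega)}^{p-1}\le \tfrac12$, which holds along the orbit since $\tau\le T_2/2$ and $T_2\le T_1$. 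Thus $\tfrac{N(\tau)-I}{\tau}$ plays exactly the role of the nonlinearity $\lambda|u|^{p-1}u$.

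First I would dispose of the endpoint $r=q$, where the weight is trivial. Writing $Z((k{+}1)\tau)\phi = S(\tau)N(\tau)Z(k\tau)\phi$, the $L^q$-contractivity of $S(\tau)$ together with the pointwise bound $\|N(\tau)w\|_{L^q}\le(1+C_p\tau\|w\|_{L^\infty(\Omega)}^{p-1})\|w\|_{L^q}$ gives the one-step estimate $\|Z((k{+}1)\tau)\phi\|_{L^q}\le (1+C\tau)\|Z(k\tau)\phi\|_{L^q}$; iterating yields $\|Z(n\tau)\phi\|_{L^q}\le e^{C n\tau}\|\phi\|_{L^q}\le C_{d,p}\|\phi\|_{L^q}$, because $n\tau<T_2\le T_1$ forces the product $\tau\|\phi\|_{L^\infty(\Omega)}^{p-1}\cdot n$ to be bounded by the definition \eqref{def_T1} of $T_1$.

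The core is the weighted estimate at an intermediate exponent. I would fix $r_0\in[q,\infty)$ with $p\alpha_0<1$ and $\frac{d(p-1)}{2r_0}<1$, where $\alpha_0=\frac d2(\frac1q-\frac1{r_0})$; such $r_0$ exists precisely because $q>\frac{d(p-1)}{2}$, which makes the two constraints on $1/r_0$ define a nonempty interval. Setting $M_n=\max_{1\le k\le n}(k\tau)^{\alpha_0}\|Z(k\tau)\phi\|_{L^{r_0}}$, I apply $\|\cdot\|_{L^{r_0}}$ to \eqref{eq-1-9}, bound the linear term by smoothing, and bound each nonlinear summand by $C((n{-}k)\tau)^{-\frac{d(p-1)}{2r_0}}\|Z(k\tau)\phi\|_{L^{r_0}}^p$ via smoothing $L^{r_0/p}\to L^{r_0}$ and the pointwise nonlinearity bound. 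Inserting $\|Z(k\tau)\phi\|_{L^{r_0}}^p\le M_{n-1}^p (k\tau)^{-p\alpha_0}$ for $k\ge1$ and treating the $k=0$ summand separately through the $L^q\cap L^\infty$ interpolation bound on $\phi$, the scaling identity $1+\alpha_0-\frac{d(p-1)}{2r_0}-p\alpha_0=1-\frac{d(p-1)}{2q}=:\delta>0$ yields the closed inequality $M_n\le C_1\|\phi\|_{L^q}+C_2 T_2^{\delta}M_{n-1}^p$. Since $T_2^{\delta}\|\phi\|_{L^q}^{p-1}\le T_0^{\delta}\|\phi\|_{L^q}^{p-1}$ is a pure constant by the scaling form \eqref{def_T0} of $T_0$, a discrete induction on $n$ gives $M_n\le 2C_1\|\phi\|_{L^q}$ for all $n$ with $n\tau<T_2$, i.e.\ \eqref{loc_wp} at $r=r_0$.

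Finally I would propagate to every $r\in[q,\infty]$: interpolating the $L^q$ and weighted $L^{r_0}$ bounds covers $r\in[q,r_0]$, while $r\in(r_0,\infty]$ is reached by finitely many parabolic bootstrap steps, each application of \eqref{eq-1-9} with smoothing $L^{r_j/p}\to L^{r_{j+1}}$ raising the exponent; because $r_0>\frac{d(p-1)}{2}$, the induced map on $1/r_j$ drives $1/r_j$ past $0$ (so that $r=\infty$ is attained) in finitely many steps. I expect the genuinely new difficulty, relative to the continuous theory, to be the discrete singular convolution estimate $\tau\sum_{k=1}^{n-1}((n{-}k)\tau)^{-a}(k\tau)^{-b}\le C_{a,b}(n\tau)^{1-a-b}$ for $a,b\in(0,1)$, together with the careful handling of the two endpoint summands ($k$ near $0$ and $k$ near $n$, where the kernels are singular) and of the $k=0$ initial term; this is the step where the Riemann-sum comparison to the Beta integral must be made uniform in $\tau$ and $n$.
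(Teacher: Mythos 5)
Your core argument coincides with the paper's: both start from the discrete Duhamel identity \eqref{eq-1-9}, control the nonlinear increment by the pointwise bound $|\tau^{-1}(N(\tau)-I)w|\lesssim_p |w|^{p}$ extracted from the explicit formula \eqref{NL-flow_2}, apply the Gaussian-dominated smoothing \eqref{linear_heat}, sum the resulting discrete singular convolution, and close an induction on $n$ using the smallness $T_0^{1-\frac{d(p-1)}{2q}}\|\phi\|_{L^q(\Omega)}^{p-1}\lesssim 1$ built into \eqref{def_T0}. The organizational difference is that the paper runs the induction simultaneously over all $r\in[q,\infty]$, always measuring the nonlinear summand in $L^q$ via $\|\,|w|^p\|_{L^q(\Omega)}=\|w\|_{L^{pq}(\Omega)}^{p}$ and invoking the inductive hypothesis at the single exponent $pq$; this delivers every $r$, including $r=q$ and $r=\infty$, in one pass, with no interpolation and no bootstrap. (Your explicit handling of the $k=0$ summand via $\phi\in L^q\cap L^\infty$ is, if anything, more careful than the paper's.)

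There is, however, one step that does not work as written: the endpoint $r=q$. Your one-step bound reads $\|Z((k{+}1)\tau)\phi\|_{L^q}\le\bigl(1+C_p\tau\|Z(k\tau)\phi\|_{L^\infty(\Omega)}^{p-1}\bigr)\|Z(k\tau)\phi\|_{L^q}$, so the Gronwall quantity is $\tau\sum_{k<n}\|Z(k\tau)\phi\|_{L^\infty(\Omega)}^{p-1}$, not $n\tau\,\|\phi\|_{L^\infty(\Omega)}^{p-1}$. For $\lambda=1$ the only available control on the iterates is Proposition \ref{prop_def_N}, namely $\|Z(k\tau)\phi\|_{L^\infty(\Omega)}^{p-1}\le M/(1-(p-1)k\tau M)$ with $M=\|\phi\|_{L^\infty(\Omega)}^{p-1}$; when $T_2=T_1$ in \eqref{def_T2} the admissible $n\tau$ approach $T_1=((p-1)M)^{-1}$ and the Gronwall sum is of size $\log\frac{1}{(p-1)\tau M}$, so the resulting constant degenerates as $\tau\to0$ and in any case depends on $\|\phi\|_{L^\infty(\Omega)}$, which \eqref{loc_wp} forbids. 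Since your interpolation for $r\in(q,r_0)$ leans on this $L^q$ bound, the gap propagates to that whole range. The repair is exactly the paper's device: run your $Q$-decomposition of \eqref{eq-1-9} at $r=q$ as well, estimating each nonlinear summand by smoothing from $L^q$ after writing $\|\,|w|^p\|_{L^q(\Omega)}=\|w\|_{L^{pq}(\Omega)}^{p}$ and using the weighted bound at exponent $pq$, so that the closing smallness again comes from \eqref{def_T0} rather than from an $L^\infty$ Gronwall inequality. The remainder of your outline (choice of $r_0$, the discrete Beta-type convolution bound, the finite bootstrap to $r=\infty$ enabled by $r_j>\frac{d(p-1)}{2}$) is sound.
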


Next, we show the convergence result depending on the regularity of the initial datum $\phi$.
We carefully try to measure the difference between the solution $u(n\tau)$ and the approximation $Z(n\tau)\phi$, based on the Duhamel formulas \eqref{eq-1-8} and \eqref{eq-1-9}. In addition, using the inequality \eqref{loc_wp}, we can derive the following convergence result of $Z(n\tau)\phi$ , which will be proved in Section \ref{sec3} later.

%

\begin{thm}\label{main_thm}
Let $p\geq 2$, $q\geq1$, $\frac{d(p-1)}{2}<q<\infty$ and $r\in[q,\infty]$.
If we assume that $\phi \in W^{1,q}(\Omega) \cap L^{\infty}(\Omega)$, then there exists a constant $C_{d,p,q}>0$, independent of the small switching time $\tau<1$ and the domain $\Omega$, such that
    \begin{equation}\label{main_thm_est'}
    \begin{split}
    &\sup_{0<n\tau<T_2} (n\tau)^{\frac{d}{2} \left(\frac{1}{q}-\frac{1}{r}\right) } \| u(n\tau) - Z(n\tau)\phi \|_{L^r(\Omega)}
        \\
     &\qquad\qquad \leq C_{d,p,q} \, T_2^{1 -\frac{d(p-1)}{2q}}\tau
    \|\phi\|_{W^{1,q}(\Omega)}^p \left(1 + \| \phi\|_{W^{1,q}(\Omega)}^{p-1} \right),
    \end{split}
    \end{equation}
where $\tau \in (0, T_2/2]$ is a given small switching time and $T_2>0$ is given by \eqref{def_T2}.
\end{thm}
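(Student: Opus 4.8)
The plan is to subtract the discrete Duhamel identity \eqref{eq-1-9} applied to $\phi$ from the continuous one \eqref{eq-1-8} evaluated at $t=n\tau$, and to estimate the resulting error $e_n := u(n\tau) - Z(n\tau)\phi$ in the weighted norm on the left of \eqref{main_thm_est'}. Writing $f(u)=\lambda|u|^{p-1}u$ and breaking the integral in \eqref{eq-1-8} into the subintervals $[k\tau,(k+1)\tau]$, I would match each continuous contribution against the corresponding term $\tau S((n-k)\tau)\bigl(\frac{N(\tau)-I}{\tau}\bigr)Z(k\tau)\phi$ and split the discrepancy into three pieces: (i) a \emph{consistency error} $\frac{N(\tau)-I}{\tau}w - f(w)$ of the nonlinear flow; (ii) a \emph{quadrature error} from replacing $S(n\tau-s)f(u(s))$ on $[k\tau,(k+1)\tau]$ by its left-endpoint value; and (iii) a \emph{coupling term} in which $f(u(k\tau))$ is replaced by $f(Z(k\tau)\phi)$, which feeds $e_k$ back into the estimate. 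Throughout I would rely on the smoothing estimate $\|S(t)g\|_{L^r}\leq C t^{-\frac{d}{2}(1/\rho-1/r)}\|g\|_{L^\rho}$ together with the a priori bounds from Theorem~\ref{QS-thm} and \eqref{loc_wp}.

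For piece (i) I would Taylor-expand the explicit solution \eqref{NL-flow_2}: since $N(t)w = w + t f(w) + O(t^2)$ for $t\in(0,T_1)$, the difference quotient obeys $\bigl|\frac{N(\tau)-I}{\tau}w - f(w)\bigr| \leq C\tau\,|w|^{2p-1}$ pointwise, with a constant depending on $\|w\|_{L^\infty}$; combined with the prefactor $\tau$ this yields an $O(\tau^2)$ contribution per step whose $L^r$ size is controlled via \eqref{loc_wp} and the uniform $L^\infty$ bound on the iterates, and which accounts for the $\|\phi\|^{2p-1}_{W^{1,q}}$ term. For piece (iii) I would use the elementary bound $|f(a)-f(b)| \leq C(|a|^{p-1}+|b|^{p-1})|a-b|$, valid precisely because $p\geq 2$, and distribute the $L^\infty$ factors (from $\phi\in L^\infty$ and the a priori bounds) against a weighted $L^r$ norm of $e_k$ via H\"older; after applying the smoothing estimate these terms assemble into a discrete Gronwall inequality for the weighted norm of $e_n$.

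Piece (ii) is where I expect the main difficulty, and where the hypothesis $\phi\in W^{1,q}$ rather than merely $\phi\in L^q$ is essential. A naive time-continuity bound $\|u(s)-u(k\tau)\|_{L^r}\lesssim (s-k\tau)^{1/2}\|\nabla u\|$ delivers only $O(\tau^{1/2})$ after summation, so to reach the full rate I would instead exploit parabolic smoothing: writing $u(s)-u(k\tau) = (S(s-k\tau)-I)u(k\tau) + \int_{k\tau}^s S(s-s')f(u(s'))\,ds'$ and bounding $\|(S(\sigma)-I)u(k\tau)\|_{L^r}\lesssim \sigma\,\|\Delta u(k\tau)\|$, where the smoothing of $S(k\tau)$ converts one power of the $W^{1,q}$ datum into a second derivative at the price of a factor $(k\tau)^{-1/2}$. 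The same device applied to the semigroup difference $S(n\tau-s)-S((n-k)\tau)$, via $S(a)-S(b)=-\int_a^b \Delta S(\rho)\,d\rho$, produces an extra factor $(s-k\tau)$ together with an integrable temporal weight. The remaining task is then purely a matter of summation: each contribution carries one factor $\tau$ from the step length, a second factor $\tau$ (or $\sigma$) from the local error, and a singular weight of the form $(k\tau)^{-1/2}$ or $((n-k)\tau)^{-\frac{d(p-1)}{2q}}$; because $q>\frac{d(p-1)}{2}$ these singularities remain integrable, and a Beta-function estimate for $\int_0^{n\tau}(n\tau-s)^{-\alpha}s^{-\beta}\,ds$ produces exactly the factor $T_2^{1-\frac{d(p-1)}{2q}}$. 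Feeding everything into the discrete Gronwall inequality of Step (iii) then closes the bound \eqref{main_thm_est'}.
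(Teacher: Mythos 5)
Your three-way decomposition (consistency, quadrature, coupling) is exactly the paper's splitting into $Q_1$, $Q_2$, $Q_3$ in Section \ref{sec3}, and your treatment of the consistency piece (Taylor expansion of \eqref{NL-flow_2}, giving $\bigl|\frac{N(\tau)-I}{\tau}u-\lambda|u|^{p-1}u\bigr|\le c_p\tau|u|^{2p-1}$) and of the coupling piece (a Lipschitz bound plus the a priori estimates, closed by an induction that works because $T_2\le T_0$ is small) coincide with Lemma \ref{lem-2-3} and the $Q_1$, $Q_3$ estimates. The genuine gap is in your piece (ii). You propose to split the quadrature error into an integrand increment $u(s)-u(k\tau)$, controlled via $\|(S(\sigma)-I)u(k\tau)\|\lesssim\sigma\|\Delta u(k\tau)\|$, and a semigroup increment $S(n\tau-s)-S((n-k)\tau)=-\int\Delta S(\rho)\,d\rho$. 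The first requires a second-order a priori bound of the form $\|\Delta u(t)\|_{L^q}\lesssim t^{-1/2}\|\phi\|_{W^{1,q}}(\cdots)$, which is proved neither in the paper nor in your sketch, and which is unusable at $k=0$ where $u(0)=\phi$ is only $W^{1,q}$. The second, after the mean-value bound, produces a weight of order $(n\tau-s)^{-1-\frac{d}{2}(\frac1q-\frac1r)}$ multiplied by $(s-k\tau)\le\tau$, whose integral over $s$ diverges on the subinterval adjacent to $n\tau$ and is at best logarithmically lossy overall. Neither obstruction is necessarily fatal, but as written your argument does not deliver the clean $\mathcal{O}(\tau)$ rate.

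The device the paper uses to avoid both problems is the identity $\partial_t\bigl(S(n\tau-t)\eta(t)\bigr)=S(n\tau-t)\bigl((\partial_t-\Delta)\eta(t)\bigr)$ of Lemma \ref{lem-3-1}, which packages the semigroup increment and the integrand increment into a single total derivative and bounds the entire left-endpoint quadrature error by $\tau\int_0^{n\tau}(n\tau-t)^{-\frac{d}{2}(\frac1q-\frac1r)}\|(\partial_t-\Delta)\eta(t)\|_{L^q}\,dt$, with an integrable weight. Crucially, the lemma is applied with $\eta=\frac{N(\tau)-I}{\tau}u$ rather than with $f(u)$: Lemma \ref{lem-2-4} then uses the equation $(\partial_t-\Delta)u=\lambda|u|^{p-1}u$ to cancel every occurrence of $\Delta u$ and $u_t$, leaving only $|u|^{2p-1}$, $|u|^{p-2}|\nabla u|^2$ and $\tau|u|^{2p-3}|\nabla u|^2$, all controlled by the first-order estimates of Corollary \ref{precise_original} under the hypothesis $\phi\in W^{1,q}(\Omega)\cap L^\infty(\Omega)$. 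This is the missing idea; with it your piece (ii) reduces to the paper's $Q_2$ estimate and no second spatial derivatives of $u$ are ever needed. A smaller point: in your ordering the coupling step replaces $\frac{N(\tau)-I}{\tau}u(k\tau)$ by $\frac{N(\tau)-I}{\tau}Z(k\tau)\phi$, so you need the Lipschitz bound \eqref{basic_ineq_1} for the discrete nonlinearity $\frac{N(\tau)-I}{\tau}$ rather than for $f$ itself; it has the same form and holds for all $p>1$, not only $p\ge 2$.
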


\begin{rem}
As seen in Section \ref{sec3} later, the following error estimate will be proved:
    \begin{align}
    &\sup_{0<n\tau<T_2} (n\tau)^{\frac{d}{2} \left(\frac{1}{q}-\frac{1}{r}\right) - \left(1-\mu\right) } \| u(n\tau) - Z(n\tau)\phi \|_{L^r(\Omega)} \label{main_thm_est}\\
    &\qquad\leq C_{d,p,q} \tau
    \left( \| \phi\|_{L^q(\Omega)}^{p-2} \| \phi\|_{W^{1,q}(\Omega)}^{p+1}
    + \| \phi\|_{L^q(\Omega)}^{p-2} \| \phi\|_{W^{1,q}(\Omega)}^{2}
    + \tau \|\phi \|_{L^{\infty}(\Omega)}^{p-1}   \| \phi\|_{L^q(\Omega)}^{p-2} \| \phi\|_{W^{1,q}(\Omega)}^{2} \right),\nonumber
    \end{align}
    where $\mu:=\frac{d(p-1)}{2q}<1$.
Since $\tau\leq n\tau < T_2\leq T_1\leq 1/\|\phi\|_{L^\infty(\Omega)}^{p-1}$ and $\| \phi\|_{L^q(\Omega)} \leq \| \phi\|_{W^{1,q}(\Omega)}$, the inequality \eqref{main_thm_est} obviously implies the main result \eqref{main_thm_est'}.
\end{rem}

\begin{rem}
When $\Omega$ is a bounded domain,
the assumption $\phi \in L^\infty(\Omega)$ in Theorem \ref{main_thm} is not necessary due to $W^{1,q}(\Omega)\hookrightarrow L^\infty(\Omega)$.
\end{rem}

\vspace{0.1cm}

This paper is organized as follows.
In Section \ref{sec2}, we give some properties regarding the operators $S(t)$ and $N(t)$, which are essentially used in the proof of Theorem \ref{main_thm}.
In Section \ref{sec3}, we prove the main result: Theorem \ref{main_thm} by the use of induction.
In Section \ref{sec4}, we try to confirm the analyzed result \eqref{main_thm_est'} by some numerical experiments.


\section{Preliminaries}\label{sec2}

In this section, we discuss some properties of the linear heat flow $S(t)$ and nonlinear flow $N(t)$.
From Proposition 48.4 in \cite{QS}, we first give the following basic property of the flow $S(t)$: Let $p\geq1$ and $r \in [q,\infty]$ with $q\geq 1$. If we assume that $\phi\in L^q(\Omega)$, then we have
    \begin{equation}\label{linear_heat}
    \| S(t)\phi \|_{L^{r}(\Omega)}
    \leq (4\pi t)^{-\frac{d}{2}\left(\frac{1}{q}-\frac{1}{r}\right)} \|\phi\|_{L^q(\Omega)}\qquad\mbox{ for }t>0.
    \end{equation}
By \eqref{linear_heat}, we show the useful estimate of $u$ to the problem \eqref{eq-main}, which is similar to \eqref{NLH_basic}.

\begin{prop}\label{regular_bdd_prop}
Let $p\geq 2$, $q\geq1$, $\frac{d(p-1)}{2} < q <\infty$ and $r \in [q, \infty]$.
If we assume that $\phi\in W^{1,q}(\Omega)$, then there exists a time $T_0>0$ as in \eqref{def_T0}, satisfying
    \begin{equation}\label{NLH_modi_1}
    \sup_{t\in[0,T_0)} t^{\frac{d}{2}\left(\frac{1}{q}-\frac{1}{r}\right)} \| u(t) \|_{W^{1,r}(\Omega)} \leq
    C_{d,p,q} \|\phi\|_{W^{1,q}(\Omega)},
    \end{equation}
where $C_{d,p,q}>0$ is a constant independent of the domain $\Omega$.
\end{prop}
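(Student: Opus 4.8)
The plan is to upgrade the $L^r$ estimate \eqref{NLH_basic} of Theorem \ref{QS-thm} to the $W^{1,r}$ estimate \eqref{NLH_modi_1} by running the same fixed-point/a~priori scheme one derivative higher. Writing $\gamma := \frac{d}{2}\big(\frac1q - \frac1r\big)$, I would differentiate the Duhamel formula \eqref{eq-1-8} in space to get $\nabla u(t) = \nabla S(t)\phi + \lambda\int_0^t \nabla S(t-s)\big(|u(s)|^{p-1}u(s)\big)\,ds$, and then control the weighted quantity $K := \sup_{0<t<T_0} t^{\gamma}\|\nabla u(t)\|_{L^r(\Omega)}$. Here the hypothesis $p\ge2$ (rather than $p>1$ as in Theorem \ref{QS-thm}) is used crucially: it makes $z\mapsto |z|^{p-1}z$ a $C^1$ map, so the chain rule gives $\nabla\big(|u|^{p-1}u\big) = p\,|u|^{p-1}\nabla u$, which is what allows the derivative to be transferred onto the nonlinearity without losing a power of time.

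The linear input I would use is the first-order analogue of \eqref{linear_heat}, namely $\|\nabla S(t)f\|_{L^r(\Omega)} \le C\,t^{-\gamma}\|\nabla f\|_{L^q(\Omega)}$, which costs no extra power of $t$ and is legitimate on the inhomogeneous term because $|u|^{p-1}u$ vanishes on $\partial\Omega$. Applied to $\nabla\phi\in L^q(\Omega)$, the homogeneous part contributes exactly $C\,t^{-\gamma}\|\nabla\phi\|_{L^q(\Omega)}$, matching the target weight. For the inhomogeneous part I would move the derivative onto the nonlinearity and estimate, via Hölder's inequality, $\|\,|u(s)|^{p-1}\nabla u(s)\|_{L^\rho(\Omega)} \le \|u(s)\|_{L^m(\Omega)}^{p-1}\,\|\nabla u(s)\|_{L^r(\Omega)}$ for an intermediate exponent $m\ge q$ with $\frac1\rho = \frac{p-1}{m}+\frac1r$; the factor $\|u(s)\|_{L^m}$ is controlled by the already-proven bound \eqref{NLH_basic}, while $\|\nabla u(s)\|_{L^r}$ is absorbed into $K$.

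The point of the computation is that the time exponents balance exactly as in Theorem \ref{QS-thm}. With $\alpha := \frac d2\big(\frac1\rho - \frac1r\big) = \frac{d(p-1)}{2m}$ the semigroup singularity and $\beta := (p-1)\gamma_m + \gamma$ the weight exponent, where $\gamma_m := \frac d2\big(\frac1q - \frac1m\big)$, one finds $\alpha + \beta = \frac{d(p-1)}{2q} + \gamma$, so that after the elementary integral $\int_0^t (t-s)^{-\alpha}s^{-\beta}\,ds \simeq t^{1-\alpha-\beta}$ the total power of $t$ in $t^{\gamma}\|\nabla u(t)\|_{L^r}$ equals $1 - \frac{d(p-1)}{2q}$, which is positive precisely because of the subcriticality $q > \frac{d(p-1)}{2}$. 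Hence the nonlinear contribution is bounded by $C\,\|\phi\|_{L^q(\Omega)}^{p-1}\,T_0^{\,1-\frac{d(p-1)}{2q}}\,K$, and since the choice of $T_0$ in \eqref{def_T0} is exactly calibrated so that $\|\phi\|_{L^q(\Omega)}^{p-1}\,T_0^{\,1-\frac{d(p-1)}{2q}}$ is a fixed constant, it can be made $\le\tfrac12$ by shrinking $c_{d,p,q}$. The a~priori inequality then closes, $K \le C\|\nabla\phi\|_{L^q(\Omega)} + \tfrac12 K$, giving $K \le C\|\phi\|_{W^{1,q}(\Omega)}$; combined with \eqref{NLH_basic} this is \eqref{NLH_modi_1}.

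I expect the main obstacle to be the exponent bookkeeping required to cover the full range $r\in[q,\infty]$ at once: for $r$ near $\infty$ and $q$ close to $\frac{d(p-1)}{2}$ the weight exponent $\beta$ can reach $1$ for every admissible $m$, so one cannot estimate $\nabla u$ in $L^r$ directly in terms of $\nabla u$ in $L^r$. Instead I would first establish the bound at a moderate exponent $r_0$, where all the time integrals converge, and then bootstrap upward through the Duhamel formula using the lower-order bounds already obtained, exactly as in the iteration underlying Theorem \ref{QS-thm}. A secondary technical point is securing the gradient heat-semigroup estimate with a constant independent of $\Omega$; I would derive this from the Gaussian bounds for the Dirichlet heat kernel and its spatial gradient, which are dominated by the free-kernel bounds and thereby keep every constant domain-independent, as the statement requires.
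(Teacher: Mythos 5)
Your proposal follows essentially the same route as the paper's proof: differentiate the Duhamel formula, apply the $L^q\to L^r$ smoothing estimate to the term $\nabla\left(|u|^{p-1}u\right)$, control $\left\||u|^{p-1}\nabla u\right\|_{L^q(\Omega)}$ by H\"older together with \eqref{NLH_basic}, and close the a priori bound via the beta-type integral and the smallness built into the definition \eqref{def_T0} of $T_0$. The bootstrap you anticipate as the "main obstacle" is in fact unnecessary: with the paper's choice of exponents (your $\rho=q$, i.e. $m=\frac{rq(p-1)}{r-q}$) the two singular exponents are $\frac d2\left(\frac1q-\frac1r\right)\le\frac{d}{2q}<1$ and $\mu=\frac{d(p-1)}{2q}<1$ for every $r\in[q,\infty]$, so the argument closes in a single pass over the whole range of $r$.
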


\begin{proof}
We now derive the estimate \eqref{NLH_modi_1}.
Due to the result \eqref{NLH_basic}, it is enough to show
    \begin{equation}\label{prop_2_1_claim}
    \sup_{t\in[0,T_0)} t^{\frac{d}{2}\left(\frac{1}{q}-\frac{1}{r}\right)} \| \nabla u(t) \|_{L^r(\Omega)} \leq
    C_{d,p,q} \|\phi\|_{W^{1,q}(\Omega)}.
    \end{equation}
By a direct calculation, the equality \eqref{eq-1-8} implies
    $$
    \nabla u(t) = S(t)(\nabla\phi) + \int_0^t S(t-s) \left(\nabla \left(\lambda |u(s)|^{p-1} u (s) \right)\right) ds .
    $$
Using \eqref{linear_heat}, we obtain
    \begin{align}
    &t^{\frac{d}{2}\left(\frac{1}{q}-\frac{1}{r}\right)} \|\nabla u(t)\|_{L^r(\Omega)}\nonumber\\
    &\qquad\leq C_d \| \nabla \phi\|_{L^q(\Omega)} + t^{\frac{d}{2}\left(\frac{1}{q}-\frac{1}{r}\right)} \int_0^t \left\|
    S(t-s)\left( \nabla \left( \lambda |u(s)|^{p-1} u (s) \right) \right) \right\|_{L^r(\Omega)} ds\nonumber \\
    &\qquad\leq C_d \|\phi\|_{W^{1,q}(\Omega)} + C_{d,p,q} t^{\frac{d}{2}\left(\frac{1}{q}-\frac{1}{r}\right)} \int_0^t
    (t-s)^{-\frac{d}{2}\left(\frac{1}{q}-\frac{1}{r}\right)} \left\| \nabla\left( \lambda |u(s)|^{p-1} u (s) \right) \right\|_{L^q(\Omega)} ds .\label{prop_2_1_ineq1}
    \end{align}
Since
    $$
    \nabla\left(|u(s)|^{p-1} u (s)\right) = \left( (p-1)|u(s)|^{p-2}u(s) + |u(s)|^{p-1} \right) \nabla u(s),
    $$
and by H\"older's inequality and \eqref{NLH_basic} with $p\geq 2-\frac{q}{r}$ and $q\leq r$, one yields
    \begin{align}
    &\left\| \nabla\left(|u(s)|^{p-1} u (s)\right) \right\|_{L^q(\Omega)} \nonumber\\
    &\qquad\leq p \left\| |u(s)|^{p-1} |\nabla u(s)| \right\|_{L^q(\Omega)} \nonumber\\
    &\qquad\leq p \left( \|u(s)\|_{L^{\frac{rq(p-1)}{r-q}}(\Omega)}\right)^{p-1} \| \nabla u(s) \|_{L^r(\Omega)} \nonumber\\
    &\qquad\leq C_{d,p,q} \left( s^{-\frac{d}{2}\left(\frac{1}{q}-\frac{r-q}{rq(p-1)}\right)}
    \|\phi\|_{L^q(\Omega)}\right)^{p-1} \| \nabla u(s) \|_{L^r(\Omega)} \nonumber\\
    &\qquad\leq C_{d,p,q} \left( s^{-\frac{d}{2}\left(\frac{p-2}{q}+\frac{1}{r}\right)} \|\phi\|_{L^q(\Omega)}^{p-1} \right)
    s^{-\frac{d}{2}\left(\frac{1}{q}-\frac{1}{r}\right)} \left( \sup_{s\in[0,T_0)}
    s^{\frac{d}{2}\left(\frac{1}{q}-\frac{1}{r}\right)} \| \nabla u(s) \|_{L^r(\Omega)} \right).\label{prop_2_1_ineq2}
    \end{align}
On the other hand, we note that for $\alpha$, $\beta \in [0,1)$ and $t>0$,
    \begin{align}
    \int_0^t (t-s)^{-\alpha} s^{-\beta} ds
    &\leq \int_0^{t/2} \left(\frac{t}{2}\right)^{-\alpha} s^{-\beta} ds
    + \int_{t/2}^t (t-s)^{-\alpha} \left(\frac{t}{2}\right)^{-\beta} ds \nonumber\\
    &=C_{\alpha,\beta}~ t^{-\alpha-\beta+1},\label{basic_cal_1}
    \end{align}
where $C_{\alpha,\beta}:=2^{\alpha+\beta-1}\left(\frac{1}{1-\alpha}+\frac{1}{1-\beta}\right)>0$.
By \eqref{prop_2_1_ineq2} and \eqref{basic_cal_1}, and since $\frac{d}{2}\left(\frac{1}{q}-\frac{1}{r}\right) <1$ and $\frac{d(p-1)}{2q} <1$ for $p\geq2$ and $\frac{d(p-1)}{2} <q$, the inequality \eqref{prop_2_1_ineq1} becomes
    \begin{align}
    & t^{\frac{d}{2}(\frac{1}{q}-\frac{1}{r})} \|\nabla u(t)\|_{L^r(\Omega)} \nonumber\\
    &\qquad\leq C_d \|\phi\|_{W^{1,q}(\Omega)}
    + C_{d,p,q} \|\phi\|_{L^q(\Omega)}^{p-1} \left( \sup_{s\in[0,T_0)}
    s^{\frac{d}{2}\left(\frac{1}{q}-\frac{1}{r}\right)} \| \nabla u(s) \|_{L^r(\Omega)} \right)\nonumber\\
    &\qquad\qquad\times t^{\frac{d}{2}\left(\frac{1}{q}-\frac{1}{r}\right)}\int_0^t
    (t-s)^{-\frac{d}{2}\left(\frac{1}{q}-\frac{1}{r}\right)} s^{-\frac{d(p-1)}{2q}} ds \nonumber\\
    &\qquad\leq C_d \|\phi\|_{W^{1,q}(\Omega)}
    + C_{d,p,q} \|\phi\|_{L^q(\Omega)}^{p-1} \left( \sup_{s\in[0,T_0)}
    s^{\frac{d}{2}\left(\frac{1}{q}-\frac{1}{r}\right)} \| \nabla u(s) \|_{L^r(\Omega)} \right)
    t^{1-\frac{d(p-1)}{2}\frac{1}{q}}.\label{prop_2_1_ineq3}
    \end{align}
Using the definition \eqref{def_T0} of $T_0$, one sees that for $t<T_0$,
    \begin{align}
    C_{d,p,q} \|\phi\|_{L^q(\Omega)}^{p-1} t^{1-\frac{d(p-1)}{2}\frac{1}{q}}
    &\leq C_{d,p,q} \|\phi\|_{L^q(\Omega)}^{p-1} T_0^{1-\frac{d(p-1)}{2}\frac{1}{q}} \nonumber\\
    &\leq C_{d,p,q} \|\phi\|_{L^q(\Omega)}^{p-1} c_{d,p,q} \left( 1/\|\phi\|_{L^q(\Omega)}^{(p-1)q} \right)^{\frac{1}{q}}\nonumber\\
    &\leq \frac{1}{2},\label{cal_assum_T0}
    \end{align}
where $c_{d,p,q}>0$ is a sufficiently small constant given in Theorem \ref{QS-thm}.
From \eqref{cal_assum_T0}, the inequality \eqref{prop_2_1_ineq3} gives that for $t\in[0,T_0)$,
    \begin{equation}\label{prop_2_1_ineq4}
    t^{\frac{d}{2}\left(\frac{1}{q}-\frac{1}{r}\right)} \|\nabla u(t)\|_{L^r(\Omega)}
    \leq C_d\|\phi\|_{W^{1,q}(\Omega)}
    + \frac{1}{2} \left( \sup_{s\in[0,T_0)} s^{\frac{d}{2}\left(\frac{1}{q}-\frac{1}{r}\right)} \| \nabla
    u(s) \|_{L^r(\Omega)} \right),
    \end{equation}
and then taking the supremum of \eqref{prop_2_1_ineq4} on $[0,T_0)$, the desired estimate \eqref{prop_2_1_claim} follows.
\end{proof}

\begin{cor}\label{precise_original}
Let $p\geq 2$, $q\geq1$ and $\frac{d(p-1)}{2} < q <\infty$.
Suppose that $\phi\in W^{1,q}(\Omega)$.
Then for $t\in [0,T_0)$, we have the following estimates:
    $$
    \begin{aligned}
    (i)&~\left\| |u(t)|^{p-2} |\nabla u(t)|^{2} \right\|_{L^q(\Omega)}
    \leq C_{d,p,q} t^{-\mu} \| \phi\|_{L^q(\Omega)}^{p-2} \| \phi\|_{W^{1,q}(\Omega)}^{2} ,\\
    (ii)&~\left\| |u(t)|^{2p-1} \right\|_{L^q(\Omega)}
    \leq C_{d,p,q} t^{-\mu} \| \phi\|_{L^q(\Omega)}^{p-2} \| \phi\|_{W^{1,q}(\Omega)}^{p+1},
    \end{aligned}
    $$
where $\mu = \frac{d(p-1)}{2q} <1$ and $C_{d,p,q}>0$ is a constant not depending on the domain $\Omega$. Furthermore, if we assume that $\phi \in W^{1,q}(\Omega)\cap L^{\infty}(\Omega)$ and $0< t <T_2 \leq 1/\|\phi\|_{L^{\infty}(\Omega)}^{p-1}$, then we have
\begin{equation}
    (iii)~\left\| |u(t)|^{2p-3} |\nabla u(t)|^{2} \right\|_{L^q(\Omega)}
    \leq C_{d,p,q} t^{-\mu} \|\phi\|_{L^{\infty}(\Omega)}^{p-1}\|\phi\|_{L^q (\Omega)}^{p-2} \|\phi \|_{W^{1,q}(\Omega)}^2.
\end{equation}
\end{cor}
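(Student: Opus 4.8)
The three bounds share the shape $t^{-\mu}\,\|\phi\|_{L^q(\Omega)}^{p-2}\times(\text{higher norms})$, and in each case the quantity inside the $L^q$-norm is a monomial in $u$ and $\nabla u$ of total degree $p$ in (i) and $2p-1$ in (ii) and (iii). The plan is to decompose such a monomial by the generalized H\"older inequality and to estimate each factor by one of the two a priori bounds already at hand, namely \eqref{NLH_basic} for factors of $u$ and \eqref{NLH_modi_1} for factors of $\nabla u$, choosing the H\"older exponents so that the accumulated time weights collapse to exactly $t^{-\mu}$. Throughout, $\mu=\frac{d(p-1)}{2q}<1$.

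For (i) I would write $|u|^{p-2}|\nabla u|^2$ and apply H\"older, placing the $p-2$ copies of $u$ in $L^{s_1}(\Omega)$ and the two copies of $\nabla u$ in $L^{s_2}(\Omega)$, where $s_1,s_2\in[q,\infty]$ satisfy $\frac{p-2}{s_1}+\frac{2}{s_2}=\frac1q$ (such a choice exists because $\frac pq>\frac1q$). Estimating the $u$-factors by \eqref{NLH_basic} and the $\nabla u$-factors by Proposition \ref{regular_bdd_prop}, the coefficients assemble to $\|\phi\|_{L^q(\Omega)}^{p-2}\|\phi\|_{W^{1,q}(\Omega)}^2$, while the time exponent becomes $-\frac d2\big[(p-2)(\frac1q-\frac1{s_1})+2(\frac1q-\frac1{s_2})\big]=-\frac d2(\frac pq-\frac1q)=-\mu$, the middle equality using the H\"older constraint. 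The pleasant feature is that this value is independent of the particular split, so (i) is routine.

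For (ii) the monomial $|u|^{2p-1}$ carries no gradient, and applying \eqref{NLH_basic} to all $2p-1$ factors yields only the weight $t^{-2\mu}$, which is too singular; the missing half of the decay must come from the gradient regularity of Proposition \ref{regular_bdd_prop}. I would interpolate $\|u(t)\|_{L^{(2p-1)q}(\Omega)}$ by a Gagliardo--Nirenberg inequality between a gradient norm $\|\nabla u(t)\|_{L^{r}(\Omega)}$, controlled by Proposition \ref{regular_bdd_prop}, and a plain norm $\|u(t)\|_{L^{\rho}(\Omega)}$, controlled by \eqref{NLH_basic}; extending $u(t)\in W^{1,r}_0(\Omega)$ by zero to $\mathbb{R}^d$ keeps the interpolation constant independent of $\Omega$. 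The exponents $r,\rho$ and the interpolation weight $\theta\in(0,1)$ are then pinned down by two requirements: the Gagliardo--Nirenberg scaling identity, and the condition that the resulting time weight be exactly $t^{-\mu}$. Carrying this out gives a bound $C\,t^{-\mu}\,\|\phi\|_{L^q(\Omega)}^{a}\|\phi\|_{W^{1,q}(\Omega)}^{2p-1-a}$ with $a\ge p-2$, and since $\|\phi\|_{L^q(\Omega)}\le\|\phi\|_{W^{1,q}(\Omega)}$ one lowers the $L^q$-exponent to $p-2$ (raising the $W^{1,q}$-exponent correspondingly to $p+1$) to reach the stated form. I expect this interpolation — in particular verifying that the exponent bookkeeping delivers $t^{-\mu}$ together with $a\ge p-2$ — to be the main obstacle.

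For (iii) I would reduce to (i). Factoring $|u|^{2p-3}=|u|^{p-1}|u|^{p-2}$ and pulling the first factor out in $L^\infty(\Omega)$ gives $\big\||u|^{2p-3}|\nabla u|^2\big\|_{L^q(\Omega)}\le\|u(t)\|_{L^\infty(\Omega)}^{p-1}\,\big\||u|^{p-2}|\nabla u|^2\big\|_{L^q(\Omega)}$, and (i) bounds the second factor by $C\,t^{-\mu}\|\phi\|_{L^q(\Omega)}^{p-2}\|\phi\|_{W^{1,q}(\Omega)}^2$. The only remaining ingredient is the uniform bound $\|u(t)\|_{L^\infty(\Omega)}\le C\|\phi\|_{L^\infty(\Omega)}$ on $(0,T_2)$, which is exactly where the hypotheses $\phi\in L^\infty(\Omega)$ and $T_2\le 1/\|\phi\|_{L^\infty(\Omega)}^{p-1}$ enter: it follows by comparing the Duhamel formula \eqref{eq-1-8} with the nonlinear flow $N(t)$, whose blow-up time $T_1$ dominates $T_2$. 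Combining the two estimates yields the claimed bound.
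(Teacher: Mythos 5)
Your proposal is correct, and parts (i) and (iii) follow essentially the paper's own route: for (i) the paper takes the particular H\"older split $\|u\|_{L^\infty}^{p-2}\|\nabla u\|_{L^\infty}\|\nabla u\|_{L^q}$ (a special case of your family of splits, all of which indeed give $t^{-\mu}$), and for (iii) it factors out $\|u(t)\|_{L^\infty}^{p-1}$ and proves $\|u(t)\|_{L^\infty}\le C\|\phi\|_{L^\infty}$ from the Duhamel formula exactly as you indicate. The genuine divergence is in (ii). You interpolate $\|u(t)\|_{L^{(2p-1)q}}$ by Gagliardo--Nirenberg between $\|\nabla u(t)\|_{L^r}$ (controlled by Proposition \ref{regular_bdd_prop}) and $\|u(t)\|_{L^\rho}$ (controlled by \eqref{NLH_basic}); the bookkeeping does close: forcing the time weight to be $t^{-\mu}$ pins $\theta=\frac{d(p-1)}{(2p-1)q}\in(0,1)$, whence $(2p-1)(1-\theta)>2p-3\ge p-2$, and e.g.\ $r=\rho=\frac{(2p-1)q}{p}\ge q$ satisfies the scaling identity. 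The paper instead never touches $\nabla u(t)$ in (ii): it writes $\|\,|u|^{2p-1}\|_{L^q}\le\|u\|_{L^{(2p-1)q}}^{p-2}\,\|u\|_{L^{(2p-1)q}}^{p+1}$ and applies the smoothing estimate \eqref{NLH_basic} with two different source spaces --- $L^q$ for the first $p-2$ factors and $L^{(p+1)q/2}$ for the remaining $p+1$ --- so that the accumulated weight is again exactly $t^{-\mu}$, and then converts $\|\phi\|_{L^{(p+1)q/2}}$ into $\|\phi\|_{W^{1,q}}$ by the Sobolev embedding applied to the \emph{initial datum}. The trade-off: your argument needs the evolution-level gradient bound \eqref{NLH_modi_1} and the zero-extension of $u(t)$ to make the Gagliardo--Nirenberg constant domain-independent, while the paper's needs only the static embedding $W^{1,q}(\Omega)\hookrightarrow L^{(p+1)q/2}(\Omega)$ for $\phi$ (valid since $\frac{d(p-1)}{(p+1)q}<1$) plus uniqueness of the $L^{(p+1)q/2}$-solution; both deliver the stated exponents after using $\|\phi\|_{L^q}\le\|\phi\|_{W^{1,q}}$.
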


\begin{proof}
First, we use H\"older's inequality and apply the estimates \eqref{NLH_basic} and \eqref{NLH_modi_1} to deduce
\begin{equation}
\begin{split}
\left\| |u(t)|^{p-2}|\nabla u(t)|^2\right\|_{L^q (\Omega)} &\leq \|u(t)\|_{L^{\infty}(\Omega)}^{p-2} \|\nabla u(t) \|_{L^{\infty}(\Omega)} \|\nabla u(t)\|_{L^q (\Omega)}
\\
&\leq C_{d,p,q} t^{-\frac{d(p-1)}{2q}} \|\phi\|_{L^q (\Omega)}^{p-2} \|\phi \|_{W^{1,q}(\Omega)}^2,
\end{split}
\end{equation}
which is the estimate $(i)$.
Also, using H\"older's inequality, \eqref{NLH_basic} and Sobolev embedding \cite{A}, the second estimate $(ii)$ is obtained as follows:
    $$
    \begin{aligned}
    \left\| |u(t)|^{2p-1} \right\|_{L^q(\Omega)}
    &\leq \| u(t)\|_{L^{(2p-1)q}(\Omega)}^{p-2} \| u(t)\|_{L^{(2p-1)q}(\Omega)}^{p+1} \\
    &\leq C_{d,p,q} t^{-\frac{d(p-1)}{2q}} \| \phi\|_{L^q(\Omega)}^{p-2} \| \phi\|_{L^{\frac{(p+1)q}{2}}(\Omega)}^{p+1} \\
    &\leq C_{d,p,q} t^{-\frac{d(p-1)}{2q}} \| \phi\|_{L^q(\Omega)}^{p-2} \| \phi\|_{W^{1,q}(\Omega)}^{p+1},
    \end{aligned}
    $$
where the last inequality holds true since $d\left( \frac{1}{q} - \frac{2}{(p+1)q}\right) = \frac{d(p-1)}{(p+1)q} <1$ for $p \geq 2$ and $q> \frac{d(p-1)}{2}$.

To show the estimate $(iii)$, we recall that $\|S(t) v\|_{L^{\infty}(\Omega)} \leq \|v\|_{L^{\infty}(\Omega)}$ for any $v \in L^{\infty}(\Omega)$ (see e.g. Proposition 48.4 in \cite{QS}).
By this inequality, the solution $u(t)$ formulated by \eqref{eq-1-8} is estimated to be
$$
\|u(t)\|_{L^{\infty}(\Omega)} \leq \|\phi \|_{L^{\infty}(\Omega)} + \int_0^t \|u(s)\|_{L^{\infty}(\Omega)}^{p} ds, \qquad \forall~ t \geq 0.
$$
So, it follows by the standard argument that
\begin{equation}
\|u(t)\|_{L^{\infty}(\Omega)} \leq C \|\phi\|_{L^{\infty}(\Omega)}\qquad \textrm{for }\,0< t < T_2 \leq 1/\|\phi\|_{L^{\infty}(\Omega)}^{p-1}.
\end{equation}
Combining this with H\"older's inequality and the estimate $(i)$, one yields
\begin{equation}
\begin{split}
\left\| |u(t)|^{2p-3} |\nabla u(t)|^{2} \right\|_{L^q(\Omega)}
&\leq  \|u(t)\|_{L^{\infty}(\Omega)}^{p-1}\left\| |u(t)|^{p-2}|\nabla u(t)|^2\right\|_{L^q (\Omega)}
\\
&\leq C_{d,p,q} t^{-\frac{d(p-1)}{2q}} \|\phi\|_{L^{\infty}(\Omega)}^{p-1}\|\phi\|_{L^q (\Omega)}^{p-2} \|\phi \|_{W^{1,q}(\Omega)}^2,
\end{split}
\end{equation}
which gives the estimate $(iii)$.
\end{proof}


Next, we show the well-definedness of the operator $Z(k\tau)$ for $k=1,2,\cdots,N$.

\begin{prop}\label{prop_def_N}
Suppose that $\phi \in L^{\infty}(\Omega)$ and there is a number $N\in \mathbb{N}$ such that $N\tau<T_1$ for a given time step $\tau>0$.
Then $Z(k\tau) \phi = (S(\tau) N(\tau))^k \phi$ is well-defined for $k=1,2,\cdots, N$.
\end{prop}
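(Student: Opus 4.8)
The plan is to argue by induction on $k$, controlling at each step the quantity $M_k := \|Z(k\tau)\phi\|_{L^\infty(\Omega)}$. Two ingredients drive the proof. First, the linear heat flow is an $L^\infty$-contraction, $\|S(\tau)v\|_{L^\infty(\Omega)} \leq \|v\|_{L^\infty(\Omega)}$ (Proposition 48.4 in \cite{QS}, already invoked in Corollary \ref{precise_original}), so interlacing $S(\tau)$ can only lower the sup-norm. Second, the nonlinear flow $N(\tau)$ acts pointwise through the explicit formula \eqref{NL-flow_2}, and the scalar map $g(s) := s\,(1-(p-1)\lambda\tau s^{p-1})^{-1/(p-1)}$ is nonnegative and increasing wherever its bracket is positive; hence, whenever $(p-1)|\lambda|\tau\,\|v\|_{L^\infty(\Omega)}^{p-1} < 1$ (that is, $\tau < T_1(\|v\|_{L^\infty(\Omega)})$), the element $N(\tau)v$ is well-defined in $L^\infty(\Omega)$ with $\|N(\tau)v\|_{L^\infty(\Omega)} \leq g(\|v\|_{L^\infty(\Omega)})$.

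First I would introduce the scalar comparison sequence $a_0 := \|\phi\|_{L^\infty(\Omega)}$ and $a_{k+1} := g(a_k)$, which by the semigroup property of the autonomous ODE $w' = \lambda|w|^{p-1}w$ has the closed form
\[
a_k = \|\phi\|_{L^\infty(\Omega)}\left(1-(p-1)\lambda(k\tau)\|\phi\|_{L^\infty(\Omega)}^{p-1}\right)^{-1/(p-1)},
\]
which is finite for every $k=0,1,\dots,N$ because $k\tau \leq N\tau < T_1$. Taking $M_k \leq a_k < \infty$ as the induction hypothesis, the inductive step first requires that $N(\tau)$ be applicable to $Z(k\tau)\phi$, i.e. $(p-1)|\lambda|\tau M_k^{p-1} < 1$; since $M_k \leq a_k$ and $g$ is increasing it suffices to check $(p-1)|\lambda|\tau a_k^{p-1} < 1$. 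Writing $c := (p-1)\|\phi\|_{L^\infty(\Omega)}^{p-1} = 1/T_1$, a direct substitution reduces this, in the delicate focusing case $\lambda = 1$, to
\[
(p-1)\tau a_k^{p-1} = \frac{c\tau}{1-ck\tau} < 1 \iff c(k+1)\tau < 1 \iff (k+1)\tau < T_1,
\]
which holds because $(k+1)\tau \leq N\tau < T_1$ (for $\lambda = -1$ the denominator becomes $1+ck\tau$ and the bound is automatic). The contractivity of $S(\tau)$ together with the monotonicity of $g$ then gives $M_{k+1} = \|S(\tau)N(\tau)Z(k\tau)\phi\|_{L^\infty(\Omega)} \leq g(M_k) \leq g(a_k) = a_{k+1} < \infty$, which is precisely the hypothesis at level $k+1$; in particular $Z((k+1)\tau)\phi \in L^\infty(\Omega)$ is well-defined.

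The only genuine obstacle is the focusing case $\lambda = 1$, where $N(\tau)$ strictly increases the sup-norm and the scalar flow blows up at $T_1$; the defocusing case $\lambda = -1$ is immediate since there $g(s) \leq s$, so $M_k \leq \|\phi\|_{L^\infty(\Omega)}$ for all $k$ and no threshold is ever approached. What makes the focusing case work is exactly that the dissipative flow $S(\tau)$ cannot increase the sup-norm, so the growth of $M_k$ is dominated by the pure-ODE sequence $a_k$; the heart of the matter is then the displayed algebraic equivalence, showing that the iterated nonlinear flow stays below its blow-up threshold for exactly $N$ steps under the hypothesis $N\tau < T_1$. I expect the verification that $g$ is increasing and that the pointwise bound $\|N(\tau)v\|_{L^\infty(\Omega)} \leq g(\|v\|_{L^\infty(\Omega)})$ is valid to demand the most care, but these are routine once the positivity of the bracket $1-(p-1)\lambda\tau|v(\mathbf{x})|^{p-1}$ is secured uniformly in $\mathbf{x}$.
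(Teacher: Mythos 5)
Your proof is correct and follows essentially the same route as the paper: an induction on $k$ using the $L^\infty$-contractivity of $S(\tau)$ and the explicit formula \eqref{NL-flow_2}, with the induction hypothesis $\|Z(k\tau)\phi\|_{L^\infty(\Omega)}^{p-1}\leq M/\bigl(1-(p-1)\lambda k\tau M\bigr)$, $M=\|\phi\|_{L^\infty(\Omega)}^{p-1}$, which is exactly your comparison sequence $a_k^{p-1}$. Your explicit check that the bracket stays positive at each step (i.e.\ that $N(\tau)$ is applicable to $Z(k\tau)\phi$) is a detail the paper leaves implicit, but the argument is the same.
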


\begin{proof}
First, we consider the case of $\lambda>0$.
To show this proposition, we shall use an induction argument.
Let $M>0$ be a constant given by $\|\phi\|_{L^\infty(\Omega)}^{p-1} \leq M$.

\noindent\underline{\it Step 1 (Base case)}.
From \eqref{linear_heat} and \eqref{NL-flow_2}, we have
    \begin{align}
    \left\| S(\tau)N(\tau)\phi \right\|_{L^\infty(\Omega)}^{p-1}
    &\leq \left\| N(\tau)\phi \right\|_{L^\infty(\Omega)}^{p-1}\nonumber\\
    &\leq \frac{\| \phi \|_{L^\infty(\Omega)}^{p-1}}{1-(p-1)\lambda\tau \| \phi \|_{L^\infty(\Omega)}^{p-1}}\nonumber\\
    &\leq \frac{M}{1-(p-1)\lambda\tau M}.\label{prop_2_3_ineq1}
    \end{align}

\noindent\underline{\it Step 2 (Inductive step)}.
Assume that for any $k=1,2,\cdots, N-1$,
    $$
    \left\| \left(S(\tau)N(\tau)\right)^k\phi \right\|_{L^\infty(\Omega)}^{p-1}
    \leq \frac{M}{1-(p-1)\lambda k\tau M} .
    $$
Then we obtain
    $$
    \begin{aligned}
    \left\| \left(S(\tau)N(\tau)\right)^{k+1}\phi \right\|_{L^\infty(\Omega)}^{p-1}
    &= \left\| S(\tau)N(\tau) \left(S(\tau)N(\tau)\right)^k\phi \right\|_{L^\infty(\Omega)}^{p-1} \\
    &\leq \frac{\left\| \left(S(\tau)N(\tau)\right)^k\phi \right\|_{L^\infty(\Omega)}^{p-1}}{1-(p-1)\lambda\tau \left\| \left(S(\tau)N(\tau)\right)^k\phi \right\|_{L^\infty(\Omega)}^{p-1}} \\
    &\leq \frac{M}{1-(p-1)\lambda (k+1)\tau M}.
    \end{aligned}
    $$
Since $N\tau <T_1 = \left( (p-1)\lambda M \right)^{-1}$ from \eqref{def_T1}, one gets
    \begin{equation}\label{prop_2_3_ineq2}
    \left\| \left(S(\tau)N(\tau)\right)^{N}\phi \right\|_{L^\infty(\Omega)}^{p-1}
    \leq \frac{M}{1-(p-1)\lambda (N\tau) M}
    < \infty.
    \end{equation}
Using the induction argument with \eqref{prop_2_3_ineq1} and \eqref{prop_2_3_ineq2}, $Z(k\tau)\phi$ is well-defined in the case of $\lambda>0$.
Furthermore, the case of $\lambda\leq 0$ is obvious, so this proposition is concluded.
\end{proof}

\begin{lem}\label{lem-2-3}
Let $p>1$ and $\lambda \in \{-1,1\}$.
Assume that
\begin{equation}\label{eq-2-21}
0 < \tau  \leq \frac{1}{2(p-1)} \min\left\{ \frac{1}{|u|^{p-1}},~\frac{1}{|v|^{p-1}}\right\}.
\end{equation}
Then, there exists a constant $c_p>0$ such that
    \begin{equation}\label{basic_ineq_1}
    \left| \left(\frac{N(\tau) - I}{\tau}\right) u - \left(\frac{N(\tau)-I}{\tau}\right) v \right|
    \leq c_p |u-v| \left( |u|^{p-1} + |v|^{p-1} \right),
    \end{equation}
and
    \begin{equation}\label{basic_ineq_2}
    \left| \left(\frac{N(\tau) - I}{\tau}\right) u - \lambda |u|^{p-1}u \right|
    \leq c_p \tau |u|^{2p-1} .
    \end{equation}
\end{lem}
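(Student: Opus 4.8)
The plan is to use the fact that $N(t)$ is the time-$t$ flow of the scalar ODE in \eqref{NL-flow}, so that differentiating \eqref{NL-flow_2} gives the pointwise identity $\frac{d}{dt}N(t)u=\lambda|N(t)u|^{p-1}N(t)u=:f(N(t)u)$, where $f(s):=\lambda|s|^{p-1}s$ is $C^1$ on $\mathbb{R}$ with $f'(s)=\lambda p|s|^{p-1}$ (recall $p>1$). By the fundamental theorem of calculus,
\[
\left(\frac{N(\tau)-I}{\tau}\right)u-\left(\frac{N(\tau)-I}{\tau}\right)v=\frac1\tau\int_0^\tau\bigl(f(N(t)u)-f(N(t)v)\bigr)\,dt,
\]
and similarly $\frac{N(\tau)u-u}{\tau}=\frac1\tau\int_0^\tau f(N(t)u)\,dt$. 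This representation is legitimate because \eqref{eq-2-21}, rewritten as $\tau\max\{|u|^{p-1},|v|^{p-1}\}\le\frac{1}{2(p-1)}$ via $\min\{a^{-1},b^{-1}\}=(\max\{a,b\})^{-1}$, keeps the denominator in \eqref{NL-flow_2} bounded below (by $1/2$ if $\lambda=1$, by $1$ if $\lambda=-1$) on $[0,\tau]$; the same bound yields the a priori estimate $|N(t)u|\le 2^{1/(p-1)}|u|$, hence $|N(t)u|^{p-1}\le 2|u|^{p-1}$, for $t\in[0,\tau]$, and likewise for $v$.

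For \eqref{basic_ineq_2} I set $\Phi(t):=f(N(t)u)$, so that $\Phi(0)=\lambda|u|^{p-1}u$ and $\Phi'(t)=f'(N(t)u)\Phi(t)=p|N(t)u|^{2p-2}N(t)u$, giving $|\Phi'(t)|=p|N(t)u|^{2p-1}$. The representation above reads $\left(\frac{N(\tau)-I}{\tau}\right)u-\lambda|u|^{p-1}u=\frac1\tau\int_0^\tau(\Phi(t)-\Phi(0))\,dt$; bounding $|\Phi(t)-\Phi(0)|\le\int_0^t|\Phi'(s)|\,ds\le pt\sup_{[0,\tau]}|N(s)u|^{2p-1}\le c_p\,t\,|u|^{2p-1}$ with the a priori bound, a final integration in $t$ produces the factor $\tau$ and yields \eqref{basic_ineq_2}.

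For \eqref{basic_ineq_1} I apply the mean value theorem to $f$ inside the integral: $|f(N(t)u)-f(N(t)v)|\le p\,\max\{|N(t)u|,|N(t)v|\}^{p-1}|N(t)u-N(t)v|\le 2p\,(|u|^{p-1}+|v|^{p-1})\,|N(t)u-N(t)v|$ by the a priori bound. What remains --- and what I expect to be the main obstacle --- is the Lipschitz bound $|N(t)u-N(t)v|\le C_p|u-v|$ for $t\le\tau$. Writing $\delta(t):=N(t)u-N(t)v$ and using $\delta'(t)=f(N(t)u)-f(N(t)v)$, the same mean value estimate gives $|\delta'(t)|\le 2p\max\{|u|^{p-1},|v|^{p-1}\}\,|\delta(t)|$, so Gronwall's inequality yields $|\delta(t)|\le|u-v|\exp\bigl(2p\max\{|u|^{p-1},|v|^{p-1}\}\,t\bigr)$. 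The decisive point is that \eqref{eq-2-21} forces the exponent at $t=\tau$ to be at most $\frac{p}{p-1}$, a constant depending only on $p$, so $|N(t)u-N(t)v|\le e^{p/(p-1)}|u-v|$ throughout $[0,\tau]$. Substituting this into the time average and integrating in $t$ cancels the factor $1/\tau$ and gives \eqref{basic_ineq_1}. The only bookkeeping is to verify that all the constants ($2^{1/(p-1)}$, $e^{p/(p-1)}$, and the numerical factors) depend on $p$ alone, which they manifestly do.
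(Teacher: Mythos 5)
Your proof is correct, but it takes a genuinely different route from the paper's. The paper works entirely with the closed-form expression \eqref{NL-flow_2}: for \eqref{basic_ineq_1} it splits the difference into a term carrying $u-v$ and a term carrying the difference of the two denominators, then invokes the elementary inequality $a^{1/(p-1)}-b^{1/(p-1)}\leq 2p(a-b)\bigl(a^{1/(p-1)-1}+b^{1/(p-1)-1}\bigr)$ together with direct bounds on $\bigl|\tfrac{1}{1-(p-1)\lambda\tau|u|^{p-1}}-1\bigr|$ and on the difference of the two fractions; for \eqref{basic_ineq_2} it Taylor-expands $x\mapsto(1-(p-1)\lambda\tau x)^{-1/(p-1)}$ to second order and controls the remainder using $(p-1)\tau|u|^{p-1}\leq 1/2$. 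You instead never touch the explicit formula beyond extracting the a priori bound $|N(t)w|\leq 2^{1/(p-1)}|w|$ on $[0,\tau]$, and exploit the flow property $\frac{d}{dt}N(t)w=f(N(t)w)$ with $f(s)=\lambda|s|^{p-1}s$: the time-averaged representation $\frac{N(\tau)-I}{\tau}w=\frac1\tau\int_0^\tau f(N(t)w)\,dt$ reduces \eqref{basic_ineq_2} to a bound on $\frac{d}{dt}f(N(t)u)$ and reduces \eqref{basic_ineq_1} to the mean value theorem for $f$ plus the Gronwall-type Lipschitz estimate $|N(t)u-N(t)v|\leq e^{p/(p-1)}|u-v|$, where the smallness condition \eqref{eq-2-21} is exactly what caps the Gronwall exponent by $p/(p-1)$. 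All steps check out (including the $C^1$ regularity of $f$ for $p>1$ and the constant bookkeeping), so both arguments are valid; the paper's is more self-contained and purely algebraic, while yours is structurally cleaner and would carry over verbatim to any nonlinear flow $N(t)$ generated by a $C^1$ vector field for which one has an a priori bound of the type $|N(t)w|\lesssim|w|$ on the time interval in question, without needing a closed-form solution.
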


\begin{proof}
From \eqref{NL-flow_2}, one sees that
    $$
    \begin{aligned}
    &\left(\frac{N(\tau) - I}{\tau}\right) u - \left(\frac{N(\tau)-I}{\tau}\right) v \\
    &\qquad= \frac{u-v}{\tau}\left( \left( \frac{1}{1-(p-1)\lambda\tau |u|^{p-1}} \right)^{\frac{1}{p-1}} -1 \right)\\
    &\qquad\qquad + \frac{v}{\tau} \left( \left( \frac{1}{1-(p-1)\lambda\tau |u|^{p-1}} \right)^{\frac{1}{p-1}} - \left(
        \frac{1}{1-(p-1)\lambda\tau |v|^{p-1}} \right)^{\frac{1}{p-1}} \right) .
    \end{aligned}
    $$
By \eqref{eq-2-21}, we notice that $(p-1)\tau |u|^{p-1}<1/2$ and $(p-1)\tau |v|^{p-1} <1/2$.
Without loss of generality, it is assumed that $0\leq |v(x)| \leq |u(x)|$.
Since
    $$
    a^{\frac{1}{p-1}} - b^{\frac{1}{p-1}} \leq 2p(a-b) \left( a^{\frac{1}{p-1}-1} +
    b^{\frac{1}{p-1}-1} \right)
    $$
for all $0\leq b \leq a$ and $p>1$, we have
    $$
    \begin{aligned}
    &\left| \left(\frac{N(\tau) - I}{\tau}\right) u - \left(\frac{N(\tau)-I}{\tau}\right) v \right| \\
    &\qquad\leq \frac{|u-v|}{\tau}\left| \frac{1}{1-(p-1)\lambda\tau |u|^{p-1}} -1 \right| \left( \left(
    \frac{1}{1-(p-1)\lambda\tau |u|^{p-1}} \right)^{\frac{-p+2}{p-1}} +1 \right) \\
    &\qquad\qquad + \frac{|v|}{\tau} \left| \frac{1}{1-(p-1)\lambda\tau |u|^{p-1}} - \frac{1}{1-(p-1)\lambda\tau |v|^{p-1}} \right|\\
    &\qquad\qquad \times\left( \left( \frac{1}{1-(p-1)\lambda\tau |u|^{p-1}} \right)^{\frac{-p+2}{p-1}} + \left(
    \frac{1}{1-(p-1)\lambda\tau |v|^{p-1}} \right)^{\frac{-p+2}{p-1}} \right),
    \end{aligned}
    $$
and then it implies
    \begin{equation}\label{lemma_2_4_ineq1}
    \left| \left(\frac{N(\tau) - I}{\tau}\right) u - \left(\frac{N(\tau)-I}{\tau}\right) v \right|\leq \frac{2|u-v|}{\tau} A_1 + \frac{2|v|}{\tau}A_2,
    \end{equation}
where
$$
\begin{aligned}
A_1&:=\left| \frac{1}{1-(p-1)\lambda\tau |u|^{p-1}} -1 \right|,\\
A_2&:=\left| \frac{1}{1-(p-1)\lambda\tau |u|^{p-1}} - \frac{1}{1-(p-1)\lambda\tau |v|^{p-1}} \right|.
\end{aligned}
$$
Indeed, a direct calculation gives that
    \begin{equation}\label{basic_diff_cal}
    A_1= \left| \frac{(p-1)\lambda\tau|u|^{p-1}}{1-(p-1)\lambda\tau |u|^{p-1}} \right|\leq 2(p-1) \tau|u|^{p-1}
    \end{equation}
and
    \begin{align}
    A_2&= \left| \frac{(p-1)\lambda\tau \left( |u|^{p-1} -|v|^{p-1}\right)} { \left(1-(p-1)\lambda\tau |u|^{p-1}\right) \left(1-(p-1)\lambda\tau |v|^{p-1}\right)} \right| \nonumber\\
    &\leq 4(p-1)\tau \left( |u| -|v|\right) \left( |u|^{p-2} +|v|^{p-2}\right) \nonumber\\
    &\leq 4(p-1)\tau |u -v| \left( |u|^{p-2} +|v|^{p-2}\right).\label{basic_diff_cal2}
    \end{align}
By \eqref{basic_diff_cal} and \eqref{basic_diff_cal2}, the inequality \eqref{lemma_2_4_ineq1} becomes
    $$
    \left| \left(\frac{N(\tau) - I}{\tau}\right) u - \left(\frac{N(\tau)-I}{\tau}\right) v \right|
    \leq 4(p-1) |u-v| \left( |u|^{p-1} + 2|u|^{p-2}|v| + 2|v|^{p-1} \right),
    $$
so the desired estimate \eqref{basic_ineq_1} follows.
Furthermore, one sees
    \begin{equation}\label{lemma_2_4_eq3}
    \left(\frac{N(\tau) - I}{\tau}\right) u - \lambda|u|^{p-1}u
    = \frac{u}{\tau}\left( \left( \frac{1}{1-(p-1)\lambda\tau |u|^{p-1}} \right)^{\frac{1}{p-1}} -1 \right) -
    \lambda |u|^{p-1}u,
    \end{equation}
and Taylor series expansion of $f(x):= \left( \frac{1}{1-(p-1)\lambda\tau x} \right)^{\frac{1}{p-1}}$ gives that for some $x_0\in(0,x)$,
    \begin{equation}\label{lemma_2_4_ineq2}
    \left( \frac{1}{1-(p-1)\lambda\tau x} \right)^{\frac{1}{p-1}}
    = 1 +\lambda\tau x + \frac{p}{2}\left( \frac{1}{1-(p-1)\lambda\tau x_0} \right)^{\frac{1}{p-1}+2} \lambda^2 \tau^2 x^2.
    \end{equation}
Here, the inequality \eqref{lemma_2_4_ineq2} can be derived by seeing
    $$
    \begin{aligned}
    f'(x)
    &= \lambda\tau \left( \frac{1}{1-(p-1)\lambda\tau x} \right)^{\frac{1}{p-1}+1} ,\\
    f''(x)
    &= p\lambda^2\tau^2 \left( \frac{1}{1-(p-1)\lambda\tau x} \right)^{\frac{1}{p-1}+2}.
    \end{aligned}
    $$
By \eqref{lemma_2_4_ineq2}, and since $(p-1)\tau x_0 \leq (p-1)\tau |u|^{p-1} \leq 1/2$, the quantity of \eqref{lemma_2_4_eq3} is estimated to be
    $$
    \begin{aligned}
    \left| \left(\frac{N(\tau) - I}{\tau}\right) u - \lambda|u|^{p-1}u \right|
    &= \left| \frac{u}{\tau}\left( f\left(|u|^{p-1}\right) -1 \right) -
    \lambda |u|^{p-1}u \right| \\
    &= \left| \frac{p}{2}\left( \frac{1}{1-(p-1)\lambda\tau x_0} \right)^{\frac{1}{p-1}+2} \lambda^2 \tau |u|^{2(p-1)}u \right| \\
    &\leq 4p \tau |u|^{2p-1},
    \end{aligned}
    $$
and then the inequality \eqref{basic_ineq_2} is obtained.
\end{proof}

\begin{lem}\label{lem-2-4}
Let $p\geq 2$ and $\lambda \in \{-1,1\}$.
For any $\tau\in(0,T_1 /2)$, there is a constant $c_p>0$ such that the solution $u$ of \eqref{eq-main} satisfies the following inequality:
    $$
    \left| (\partial_t - \Delta) \left( \left(\frac{N(\tau) - I}{\tau}\right) u(t) \right) \right|
    \leq c_p \left( |u|^{2p-1} + |u|^{p-2}|\nabla u|^2 + \tau |u|^{2p-3} |\nabla u|^2 \right).
    $$
\end{lem}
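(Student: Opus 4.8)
The plan is to view the pointwise operator $(N(\tau)-I)/\tau$ as a single smooth scalar nonlinearity applied to the solution $u=u(t,{\bf x})$, to differentiate it by the chain rule, and only afterwards to use the equation \eqref{eq-main} to rewrite the heat operator $\partial_t-\Delta$ as a purely algebraic expression in $u$ and $\nabla u$. Using the explicit formula \eqref{NL-flow_2}, I would set
\[
G(u):=\left(\frac{N(\tau)-I}{\tau}\right)u=\frac{u}{\tau}\left(\left(\frac{1}{1-(p-1)\lambda\tau|u|^{p-1}}\right)^{\frac{1}{p-1}}-1\right),
\]
an odd function of the scalar $u$ that is smooth wherever $(p-1)\tau|u|^{p-1}<1$. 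Because $\tau<T_1/2$ and $u(t)$ is bounded, the admissibility condition $(p-1)\tau|u|^{p-1}\le1/2$ of \eqref{eq-2-21} holds, so that $G$, $G'$ and $G''$ are all well defined and the denominator stays bounded away from zero.

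Next I would apply the chain rule to $G(u(t,{\bf x}))$. From $\partial_t[G(u)]=G'(u)\,\partial_t u$ and $\Delta[G(u)]=G''(u)\,|\nabla u|^2+G'(u)\,\Delta u$ we obtain
\[
(\partial_t-\Delta)\big(G(u)\big)=G'(u)\,(\partial_t u-\Delta u)-G''(u)\,|\nabla u|^2 .
\]
Substituting $\partial_t u-\Delta u=\lambda|u|^{p-1}u$ from \eqref{eq-main} removes every time derivative and reduces the claim to the pointwise estimate of
\[
(\partial_t-\Delta)\big(G(u)\big)=\lambda\,G'(u)\,|u|^{p-1}u-G''(u)\,|\nabla u|^2 .
\]

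The remaining and decisive task is to bound the two scalar multipliers $G'(u)$ and $G''(u)$. By oddness it suffices to treat $u>0$, where $G(u)=\tau^{-1}\big(u\,f(u^{p-1})-u\big)$ with $f(x)=(1-(p-1)\lambda\tau x)^{-1/(p-1)}$. Here I would reuse the data already computed in the proof of Lemma \ref{lem-2-3}: the explicit first and second derivatives $f'$ and $f''$, which under $(p-1)\tau x\le1/2$ satisfy $|f'(x)|\le c_p\tau$ and $|f''(x)|\le c_p\tau^2$, together with the Taylor expansion \eqref{lemma_2_4_ineq2}, which yields $|f(x)-1|\le c_p\tau x$. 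Differentiating $G$ in $u$ (with $\frac{d}{du}u^{p-1}=(p-1)u^{p-2}$) and collecting powers of $\tau$ and $u$, the prefactor $\tau^{-1}$ is exactly absorbed and one obtains
\[
|G'(u)|\le c_p\,|u|^{p-1},\qquad |G''(u)|\le c_p\big(|u|^{p-2}+\tau|u|^{2p-3}\big).
\]
Inserting these into the identity above gives $\big|\lambda\,G'(u)\,|u|^{p-1}u\big|\le c_p|u|^{2p-1}$ and $|G''(u)|\,|\nabla u|^2\le c_p\big(|u|^{p-2}+\tau|u|^{2p-3}\big)|\nabla u|^2$, whose sum is precisely the asserted bound.

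I expect the second-derivative estimate to be the main obstacle. One must verify that the single factor $\tau^{-1}$ built into $G$ is cancelled, so that $G'$ carries no negative power of $\tau$ and $G''$ produces exactly the two terms $|u|^{p-2}$ and $\tau|u|^{2p-3}$; this amounts to tracking carefully which differentiation acts on the base $u^{p-1}$ (the repeated hit through $f''$ being what generates the $\tau|u|^{2p-3}|\nabla u|^2$ contribution) rather than on the outer factor $u$. One must also confirm that the seemingly singular power $|u|^{p-3}$ arising from $f'$ in $G''$ is always accompanied by enough factors of $u$ to remain locally bounded for $p\ge2$, so that the estimate stays valid up to $u=0$.
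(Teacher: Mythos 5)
Your proposal is correct and follows essentially the same route as the paper: the paper likewise differentiates the explicit formula for $\bigl(\tfrac{N(\tau)-I}{\tau}\bigr)u$ in $t$ and ${\bf x}$, cancels the $\partial_t u$ and $\Delta u$ contributions using $(\partial_t-\Delta)u=\lambda|u|^{p-1}u$, and bounds the remaining coefficients via $|f(|u|^{p-1})-1|\leq 2(p-1)\tau|u|^{p-1}$ and the uniform boundedness of the powers of $\bigl(1-(p-1)\lambda\tau|u|^{p-1}\bigr)^{-1}$. Your chain-rule packaging through $G'$ and $G''$ is just a tidier organization of the paper's term-by-term computation, and the worry about the $|u|^{p-3}$ factor resolves exactly as you suspect, since it always appears multiplied by an extra factor of $u$.
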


\begin{proof}
By a direct calculation, one sees
    $$
    \begin{aligned}
    \partial_t \left( \left(\frac{N(\tau) - I}{\tau}\right) u(t) \right)
    &= \frac{u_t}{\tau}\left( \left( \frac{1}{1-(p-1)\lambda\tau |u|^{p-1}} \right)^{\frac{1}{p-1}} -1 \right)\\
    &\qquad + \left( \frac{1}{1-(p-1)\lambda\tau |u|^{p-1}} \right)^{\frac{1}{p-1}+1} (p-1)\lambda |u|^{p-1} u_t,\\
    \nabla \left( \left(\frac{N(\tau) - I}{\tau}\right) u(t) \right)
    &= \frac{\nabla u}{\tau}\left( \left( \frac{1}{1-(p-1)\lambda\tau |u|^{p-1}} \right)^{\frac{1}{p-1}} -1
    \right)\\
    &\qquad + \left( \frac{1}{1-(p-1)\lambda\tau |u|^{p-1}} \right)^{\frac{1}{p-1}+1} (p-1)\lambda |u|^{p-1} \nabla u ,
    \end{aligned}
    $$
and
    \begin{equation}
    \begin{aligned}
    \Delta \left( \left(\frac{N(\tau) - I}{\tau}\right) u(t) \right)
    &= \frac{\Delta u}{\tau}\left( \left( \frac{1}{1-(p-1)\lambda\tau |u|^{p-1}} \right)^{\frac{1}{p-1}} -1
    \right)\\
    &\qquad + \left( \frac{1}{1-(p-1)\tau |u|^{p-1}} \right)^{\frac{1}{p-1}+1} (p-1)\lambda |u|^{p-2} \frac{u}{|u|} (\nabla u)^2 \\
    &\qquad + \left( \frac{1}{1-(p-1)\lambda \tau |u|^{p-1}} \right)^{\frac{1}{p-1}+2} p(p-1)^2 \lambda^2 \tau |u|^{2p-3} \frac{u}{|u|} (\nabla u)^2 \\
    &\qquad+\left( \frac{1}{1-(p-1)\lambda\tau |u|^{p-1}} \right)^{\frac{1}{p-1}+1} (p-1)^2 \lambda |u|^{p-2} \frac{u}{|u|} (\nabla u)^2\\
    &\qquad +\left( \frac{1}{1-(p-1)\tau |u|^{p-1}} \right)^{\frac{1}{p-1}+1} (p-1)\lambda |u|^{p-1} \Delta u.
    \end{aligned}
    \end{equation}
Since $(\partial_t - \Delta)u = \lambda |u|^{p-1}u$, we have
    $$
    \begin{aligned}
    (\partial_t - \Delta) \left( \left(\frac{N(\tau) - I}{\tau}\right) u(t) \right)
    &= \frac{\lambda |u|^{p-1}u}{\tau}\left( \left( \frac{1}{1-(p-1)\lambda\tau |u|^{p-1}} \right)^{\frac{1}{p-1}} -1 \right) \\
    &\qquad + \left( \frac{1}{1-(p-1)\tau |u|^{p-1}} \right)^{\frac{1}{p-1}+1} (p-1)\lambda^2 |u|^{2p-2} u \\
    &\qquad - \left( \frac{1}{1-(p-1)\tau |u|^{p-1}} \right)^{\frac{1}{p-1}+1} (p-1)\lambda |u|^{p-2} \frac{u}{|u|} (\nabla u)^2 \\
    &\qquad - \left( \frac{1}{1-(p-1)\lambda \tau |u|^{p-1}} \right)^{\frac{1}{p-1}+2} p(p-1)^2 \lambda^2 \tau |u|^{2p-3} \frac{u}{|u|} (\nabla u)^2 \\
    &\qquad - \left( \frac{1}{1-(p-1)\lambda\tau |u|^{p-1}} \right)^{\frac{1}{p-1}+1} (p-1)^2 \lambda |u|^{p-2} \frac{u}{|u|} (\nabla u)^2 .
    \end{aligned}
    $$
By \eqref{basic_diff_cal}, and since $(p-1)\tau |u|^{p-1} \leq 1/2$, the proof of this lemma is concluded.
\end{proof}

\begin{lem}\label{lem-3-1}
Let $p\geq 2$, $\frac{d(p-1)}{2} < q <\infty$ with $q\geq1$, and $r \in [q, \infty]$.
For any test function $\eta \in C_t^1 \big( (0,n\tau) ; C_{\bf x}^2 (\Omega) \big)$, the following estimate holds:
    \begin{align}
    &\left\| \int_0^{n\tau} S(n \tau- s) \eta (\cdot,s) ds
        - \tau \sum_{k=0}^{n-1} S(n \tau - k \tau) \eta (\cdot, k \tau) \right\|_{L^{r}(\Omega)} \nonumber\\
    &\qquad\leq \tau \int_{0}^{n\tau} (n \tau- t)^{-\frac{d}{2} \left(\frac{1}{q} - \frac{1}{r}\right) } \big\| (\partial_t -\Delta) \eta(\cdot,t) \big\|_{L^q(\Omega)}dt.\label{lemma_2_6_ineq1}
    \end{align}
\end{lem}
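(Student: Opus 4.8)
The plan is to recognize the sum $\tau\sum_{k=0}^{n-1}S(n\tau-k\tau)\eta(\cdot,k\tau)$ as a left-endpoint quadrature of the integral $\int_0^{n\tau}S(n\tau-s)\eta(\cdot,s)\,ds$, and to control the quadrature error subinterval by subinterval via the fundamental theorem of calculus. First I would split both the integral and the sum over the intervals $[k\tau,(k+1)\tau]$, $k=0,\dots,n-1$, so that the difference on the left-hand side of \eqref{lemma_2_6_ineq1} becomes
\begin{equation*}
\sum_{k=0}^{n-1}\int_{k\tau}^{(k+1)\tau}\Big( S(n\tau-s)\eta(\cdot,s)-S(n\tau-k\tau)\eta(\cdot,k\tau)\Big)\,ds.
\end{equation*}

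The crucial step is to express the integrand on each subinterval through the identity
\begin{equation*}
\partial_s\big(S(n\tau-s)\eta(\cdot,s)\big)=S(n\tau-s)\big((\partial_s-\Delta)\eta(\cdot,s)\big),
\end{equation*}
which follows from $\partial_t S(t)=\Delta S(t)$ together with the commutativity $\Delta S(t)=S(t)\Delta$ of the heat semigroup with the Laplacian. It is here that the regularity hypothesis $\eta\in C^1_t((0,n\tau);C^2_{\bf x}(\Omega))$ is needed, so that $\partial_s\eta$ and $\Delta\eta$ are well-defined and the differentiation under $S$ is legitimate. Integrating in $s$ from $k\tau$ then gives, for $s\in[k\tau,(k+1)\tau]$,
\begin{equation*}
S(n\tau-s)\eta(\cdot,s)-S(n\tau-k\tau)\eta(\cdot,k\tau)=\int_{k\tau}^{s}S(n\tau-\sigma)(\partial_\sigma-\Delta)\eta(\cdot,\sigma)\,d\sigma.
\end{equation*}

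Next I would take $L^r$-norms and apply the smoothing estimate \eqref{linear_heat}, observing that $(4\pi)^{-\frac{d}{2}(1/q-1/r)}\le 1$ since $4\pi>1$ and the exponent is nonpositive, so $\|S(n\tau-\sigma)v\|_{L^r(\Omega)}\le (n\tau-\sigma)^{-\frac{d}{2}(1/q-1/r)}\|v\|_{L^q(\Omega)}$. Interchanging the order of the $s$- and $\sigma$-integrations on each subinterval, the inner $s$-integral contributes the weight $(k+1)\tau-\sigma\le\tau$, producing the single power of $\tau$. Summing the resulting bounds over $k$ recombines the subinterval integrals into $\tau\int_0^{n\tau}(n\tau-t)^{-\frac{d}{2}(1/q-1/r)}\|(\partial_t-\Delta)\eta(\cdot,t)\|_{L^q(\Omega)}\,dt$, which is exactly \eqref{lemma_2_6_ineq1}.

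The main obstacle is the differentiation identity for $\partial_s\big(S(n\tau-s)\eta(\cdot,s)\big)$: one must justify both the commutation of $S(t)$ with the Dirichlet Laplacian and the legitimacy of differentiating the semigroup applied to a time-dependent $C^2$ function, which is precisely what the stated regularity of $\eta$ secures. The remaining steps are bookkeeping, namely the Riemann-sum splitting and the Fubini interchange that yields the gain of one power of $\tau$.
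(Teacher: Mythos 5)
Your proposal is correct and follows essentially the same route as the paper's proof: the same subinterval decomposition via the fundamental theorem of calculus, the same key identity $\partial_s\big(S(n\tau-s)\eta(\cdot,s)\big)=S(n\tau-s)\big((\partial_s-\Delta)\eta(\cdot,s)\big)$, the same integration in $s$ producing the weight $(k+1)\tau-\sigma\le\tau$, and the same application of the smoothing estimate \eqref{linear_heat}. The only cosmetic difference is that the paper verifies the commutation identity on the Fourier side while you invoke the semigroup property directly.
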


\begin{proof}
We use a similar argument as in Lemma 4.6 of \cite{I}.
The fundamental theorem of calculus gives
    $$
    \begin{aligned}
    &\int_0^{n\tau} S(n \tau- s) \eta(\cdot, s) ds  - \tau \sum_{k=0}^{n-1} S(n \tau - k \tau) \eta(\cdot, k\tau) \\
    &\qquad=\sum_{k=0}^{n-1} \int_{k\tau}^{(k+1)\tau} \Big( S(n \tau- s) \eta(\cdot, s) - S(n \tau - k
    \tau) \eta(\cdot, k \tau) \Big) ds \\
    &\qquad=\sum_{k=0}^{n-1} \int_{k\tau}^{(k+1)\tau} \int_{k\tau}^s \partial_t \big( S(n \tau- t)
    \eta(\cdot,t) \big) dtds=: Q.
    \end{aligned}
    $$
Since
    $$
    \begin{aligned}
    \partial_t \Big( S(n \tau- t) \eta(\cdot, t) \Big)^{\wedge}
    &= \partial_t \left( e^{(n \tau- t)|\xi|^2} \widehat{\eta}(\xi,t) \right)\\
    &= -|\xi|^2 e^{(n \tau- t)|\xi|^2} \widehat{\eta}(\xi,t) + e^{(n \tau- t)|\xi|^2}
    \partial_t \widehat{\eta}(\xi,t),
    \end{aligned}
    $$
we have the following identity:
    \begin{equation}\label{lemma_2_6_identity}
    \partial_t \Big( S(n \tau- t) \eta(\cdot,t) \Big) 
    = S(n \tau- t) \Big(( \partial_t -\Delta) \eta(\cdot,t)\Big).
    \end{equation}
Using the identity \eqref{lemma_2_6_identity} and integrating with respect to $s$, one yields
    $$
    \begin{aligned}
    Q&=\sum_{k=0}^{n-1} \int_{k\tau}^{(k+1)\tau} \int_{k\tau}^s  S(n \tau- t) \Big( (\partial_t -\Delta) \eta(\cdot,t) \Big) dtds \\
    &=\sum_{k=0}^{n-1} \int_{k\tau}^{(k+1)\tau} \left((k+1)\tau-t\right) S(n \tau- t) \Big( (\partial_t -\Delta) \eta(\cdot,t) \Big) dt \\
    &=\int_{0}^{n\tau} \left( \sum_{k=0}^{n-1} \chi_{(k\tau,(k+1)\tau)}(t)
    \left((k+1)\tau-t\right) \right) S(n \tau- t) \Big( (\partial_t -\Delta) \eta(\cdot,t) \Big) dt .
    \end{aligned}
    $$
Since
    $$
    \sum_{k=0}^{n-1} \chi_{(k\tau,(k+1)\tau)}(t) \left((k+1)\tau-t\right) \leq \tau\qquad\forall\,t\in(0,n\tau),
    $$
and by \eqref{linear_heat}, the quantity $Q$ is estimated as
    $$
    \begin{aligned}
    \|Q\|_{L^r(\Omega)}
    &=\left\| \int_{0}^{n\tau} \left( \sum_{k=0}^{n-1} \chi_{(k\tau,(k+1)\tau)}(t)
    \left((k+1)\tau-t\right) \right) S(n \tau- t) \Big( (\partial_t -\Delta) \eta(\cdot,t) \Big) dt
    \right\|_{L^{r}(\Omega)} \\
    &\leq \int_{0}^{n\tau} \tau \left\| S(n \tau- t) \Big( (\partial_t -\Delta) \eta(\cdot,t) \Big) \right\|_{L^{r}(\Omega)} dt \\
    &\leq \tau \int_{0}^{n\tau} (n \tau- t)^{-\frac{d}{2} \left(\frac{1}{q} - \frac{1}{r}\right) } \big\| (\partial_t -\Delta) \eta(\cdot,t) \big\|_{L^q(\Omega)} dt,
    \end{aligned}
    $$
and then the desired inequality \eqref{lemma_2_6_ineq1} follows.
\end{proof}


\section{Proof of Theorem \ref{main_thm}: Convergence of $Z(n\tau)\phi$}\label{sec3}

In this section, we will prove Theorem \ref{main_thm} in the use of Theorem \ref{main_thm_2}.
Let $d\geq1$, $p \in [2,\infty)$, $q\geq 1$, $\frac{d(p-1)}{2}<q<\infty$ and $r\in[q,\infty]$.
Assume that $\phi\in W^{1,q}(\Omega)\cap L^{\infty}(\Omega)$.
To derive the main result \eqref{main_thm_est}, we now use an induction as follows:
For $n\geq 1$, we assume that
    \begin{equation}\label{induction_assume}
    (k\tau)^{\frac{d}{2}\left(\frac{1}{q}-\frac{1}{r}\right) - \left(1-\mu\right) } \| u(k\tau) - Z(k\tau)\phi \|_{L^r(\Omega)}
    \leq C_* \tau M_{p,q,\phi}\qquad\mbox{ for }k=0,\cdots,n-1,
    \end{equation}
where $C_*>0$ is a constant chosen later, and
    $$
    M_{p,q,\phi}
    := \| \phi\|_{L^q(\Omega)}^{p-2} \| \phi\|_{W^{1,q}(\Omega)}^{p+1}
    + \| \phi\|_{L^q(\Omega)}^{p-2} \| \phi\|_{W^{1,q}(\Omega)}^{2}
    + \tau \|\phi \|_{L^{\infty}(\Omega)}^{p-1}   \| \phi\|_{L^q(\Omega)}^{p-2} \| \phi\|_{W^{1,q}(\Omega)}^{2}.
    $$
From \eqref{eq-1-9} and \eqref{eq-1-8}, we have
    \begin{equation}\label{dif_decom}
    \begin{aligned}
    &(n\tau)^{\frac{d}{2}\left(\frac{1}{q}-\frac{1}{r}\right) - \left(1-\mu\right) } \| u(n\tau) - Z(n\tau)\phi \|_{L^r(\Omega)} \\
    &\qquad\leq (n\tau)^{\frac{d}{2}\left(\frac{1}{q}-\frac{1}{r}\right) - \left(1-\mu\right) }
    \left( \| Q_1 (n\tau) \|_{L^r(\Omega)} + \| Q_2 (n\tau)
    \|_{L^r(\Omega)} + \| Q_3 (n\tau) \|_{L^r(\Omega)} \right),
    \end{aligned}
    \end{equation}
where
    $$
    \begin{aligned}
    Q_1 (n\tau)
    &:= \int_0^{n\tau} S(n\tau -s) \left( \lambda|u|^p u(s) - \left(\frac{N(\tau) - 1}{\tau}\right) u (s) \right) ds
    ,\\
    Q_2 (n\tau)
    &:= \int_0^{n\tau} S(n\tau -s) \left( \frac{N(\tau) - 1}{\tau}\right) u(s) ds - \tau \sum_{0
    \leq k <n} S(n\tau- k\tau) \left( \frac{N(\tau)-I}{\tau}\right) u (k\tau) ,\\
    Q_3 (n\tau)
    &:= \tau \sum_{0 \leq k <n} S(n\tau -k\tau) \left[ \left( \frac{N(\tau)-I}{\tau}\right)
    u(k\tau) - \left( \frac{N(\tau) -I}{\tau}\right) Z(k\tau)\right].
    \end{aligned}
    $$
By Theorem \ref{QS-thm}, the quantity $Q_1(n\tau)$ is estimated to be
    \begin{align}
    \| Q_1 (n\tau) \|_{L^r(\Omega)}
    &\leq \int_0^{n\tau} \left\| S(n\tau -s) \left( \lambda|u(s)|^p u(s) - \left(\frac{N(\tau) - 1}{\tau}\right) u (s)
    \right)  \right\|_{L^{r}(\Omega)} ds \nonumber\\
    &\leq C_{d,p,q} \int_0^{n\tau} (n\tau -s)^{-\frac{d}{2}\left(\frac{1}{q}-\frac{1}{r}\right)} \left\| \lambda|u(s)|^p
    u(s) - \left(\frac{N(\tau) - 1}{\tau}\right) u (s)  \right\|_{L^{q}(\Omega)} ds.\label{Q1-estimate-ineq1}
    \end{align}
Using \eqref{basic_ineq_2} and Corollary \ref{precise_original}, one yields
    \begin{align}
    \left\| \lambda |u(s)|^p u(s) - \left(\frac{N(\tau) - 1}{\tau}\right) u (s) \right\|_{L^{q}(\Omega)}
    &\leq c_p \tau \left\| |u(s)|^{2p-1} \right\|_{L^{q}(\Omega)} \nonumber\\
    &\leq C_{d,p,q} \tau s^{-\mu} \| \phi\|_{L^q(\Omega)}^{p-2} \| \phi\|_{W^{1,q}(\Omega)}^{p+1}.\label{Q1-estimate-ineq2}
    \end{align}
By \eqref{Q1-estimate-ineq2}, the inequality \eqref{Q1-estimate-ineq1} implies
    \begin{align}
    &(n\tau)^{\frac{d}{2}\left(\frac{1}{q}-\frac{1}{r}\right) - \left(1-\mu\right)} \| Q_1 (n\tau) \|_{L^r(\Omega)} \nonumber\\
    &\qquad\leq C_{d,p,q} \tau (n\tau)^{\frac{d}{2}\left(\frac{1}{q}-\frac{1}{r}\right) - \left(1-\mu\right)} \| \phi\|_{L^q(\Omega)}^{p-2} \| \phi\|_{W^{1,q}(\Omega)}^{p+1}
    \int_0^{n\tau} (n\tau -s)^{-\frac{d}{2}\left(\frac{1}{q}-\frac{1}{r}\right)} s^{-\mu} ds \nonumber\\
    &\qquad\leq C_{d,p,q} \tau \| \phi\|_{L^q(\Omega)}^{p-2} \| \phi\|_{W^{1,q}(\Omega)}^{p+1} .\label{Q1-estimate-ineq3}
    \end{align}
Next, we consider the estimate for $Q_2(n\tau)$ in \eqref{dif_decom}.
Using Lemma \ref{lem-3-1} with $\eta(t) = \left( \frac{N(\tau) - 1}{\tau}\right) u(t)$ and Lemma \ref{lem-2-4}, we have
    \begin{align}
    \| Q_2 (n\tau) \|_{L^r(\Omega)}&\leq \tau \int_{0}^{n\tau} (n \tau- t)^{-\frac{d}{2}\left(\frac{1}{q}-\frac{1}{r}\right)} \left\| (\partial_t - \Delta) \left(
    \frac{N(\tau) - 1}{\tau}\right) u(t) \right\|_{L^q(\Omega)} dt \nonumber\\
    &\leq C_{p} \tau \int_{0}^{n\tau} (n \tau- t)^{-\frac{d}{2}\left(\frac{1}{q}-\frac{1}{r}\right)} A(t) dt,\label{Q2-estimate-ineq1}
    \end{align}
where
$$
A(t):=\left\| |u(t)|^{2p-1} \right\|_{L^q(\Omega)} + \left\| |u(t)|^{p-2} |\nabla u(t)|^2 \right\|_{L^q(\Omega)} + \tau \left\| |u(t)|^{2p-3} |\nabla u(t)|^2 \right\|_{L^q(\Omega)}.
$$
By Corollary \ref{precise_original}, one gets
    \begin{align}
    A(t) \leq C_{d,p,q} t^{-\mu} M_{p,q,\phi},
    \end{align}
so the inequality \eqref{Q2-estimate-ineq1} implies
    \begin{equation}\label{Q2-estimate-ineq2}
    (n\tau)^{\frac{d}{2}\left(\frac{1}{q}-\frac{1}{r}\right) - \left(1-\mu\right) } \| Q_2 (n\tau) \|_{L^r(\Omega)} \leq C_{d,p,q} \tau M_{p,q,\phi}.
    \end{equation}
Finally, we derive the estimate for $Q_3(n\tau)$ in \eqref{dif_decom}.
By \eqref{linear_heat} and Lemma \ref{lem-2-3}, one has
    \begin{align}
    &\| Q_3 (n\tau) \|_{L^r(\Omega)} \nonumber\\
    &\qquad\leq \tau \sum_{0 \leq k <n} \left\| S(n\tau -k\tau) \left[ \left( \frac{N(\tau)-I}{\tau}\right)
    u(k\tau) - \left( \frac{N(\tau) -I}{\tau}\right) Z(k\tau)\right]  \right\|_{L^{r}(\Omega)} \nonumber\\
    &\qquad\leq C_{d,p,q} \tau \sum_{0 \leq k <n} (n\tau - k\tau)^{-\frac{d}{2}\left(\frac{1}{q}-\frac{1}{r}\right)} \left\| \left( \frac{N(\tau)-I}{\tau}\right)
    u(k\tau) - \left( \frac{N(\tau) -I}{\tau}\right) Z(k\tau)  \right\|_{L^q(\Omega)} \nonumber\\
    &\qquad\leq C_{d,p,q} \tau \sum_{0 \leq k <n} (n\tau - k\tau)^{-\frac{d}{2}\left(\frac{1}{q}-\frac{1}{r}\right)}\left( \|u(k\tau)\|_{L^{\frac{qr(p-1)}{r-q}}(\Omega)}^{p-1} + \|Z (k\tau)\|_{L^{\frac{qr(p-1)}{r-q}}(\Omega)}^{p-1} \right) \nonumber\\
    &\qquad\qquad\qquad\qquad\qquad\times\|u (k\tau) - Z (k\tau)\|_{L^{r}(\Omega)}.\label{Q3-estimate-ineq1}
    \end{align}
From Theorem \ref{QS-thm} and Theorem \ref{main_thm_2}, one sees
    \begin{equation}\label{Q3-estimate-ineq2}
    \|u(k\tau)\|_{L^{\frac{qr(p-1)}{r-q}}(\Omega)}^{p-1} + \|Z (k\tau)\|_{L^{\frac{qr(p-1)}{r-q}}(\Omega)}^{p-1}
    \leq C_{d,p,q} (k\tau)^{-\frac{d}{2}\left(\frac{p-2}{q}+ \frac{1}{r}\right)} \| \phi\|_{L^q(\Omega)}^{p-1},
    \end{equation}
and on the other hand, the assumption \eqref{induction_assume} is rewritten as
    \begin{equation}\label{Q3-estimate-ineq3}
    \|u (k\tau) - Z (k\tau)\|_{L^{r}(\Omega)}
    \leq C_* \tau (k\tau)^{-\frac{d}{2}\left(\frac{1}{q}-\frac{1}{r}\right) + \left(1-\mu\right) } M_{p,q,\phi}\qquad\mbox{ for }k=0,\cdots,n-1.
    \end{equation}
By \eqref{Q3-estimate-ineq2} and \eqref{Q3-estimate-ineq3}, the inequality \eqref{Q3-estimate-ineq1} gives
    \begin{align}
    &(n\tau)^{\frac{d}{2}\left(\frac{1}{q}-\frac{1}{r}\right) - \mu} \| Q_3 (n\tau) \|_{L^r(\Omega)} \nonumber\\
    &\qquad\leq C_{d,p,q} C_* \tau^2 (n\tau)^{\frac{d}{2}\left(\frac{1}{q}-\frac{1}{r}\right) - \left(1-\mu\right) } \| \phi\|_{L^q(\Omega)}^{p-1} \|\phi\|_{W^{1,q}(\Omega)}^{2p-1} \nonumber\\
    &\qquad\qquad\times \sum_{0 \leq k <n} (n\tau - k\tau)^{-\frac{d}{2}\left(\frac{1}{q}-\frac{1}{r}\right)} (k\tau)^{1-\frac{d(p-1)}{q}} \nonumber\\
    &\qquad\leq C_{d,p,q} C_* \tau (n\tau)^{1-\frac{d(p-1)}{q}+\mu}  \| \phi\|_{L^q(\Omega)}^{p-1} M_{p,q,\phi} .\label{Q3-estimate-ineq4}
    \end{align}
As seen in \eqref{cal_assum_T0}, we note that since $(n\tau) <T_0$, and by \eqref{def_T0},
    \begin{equation}\label{Q3-estimate-ineq5}
    C_{d,p,q} (n\tau)^{1-\frac{d(p-1)}{q}+\mu} \|\phi\|_{L^q(\Omega)}^{p-1}
    \leq \frac{1}{2}.
    \end{equation}
Applying \eqref{Q3-estimate-ineq5} to \eqref{Q3-estimate-ineq4}, one gets
    \begin{equation}\label{Q3-estimate-ineq6}
    (n\tau)^{\frac{d}{2}\left(\frac{1}{q}-\frac{1}{r}\right) - \left(1-\mu\right) } \| Q_3 (n\tau) \|_{L^r(\Omega)} \\
    \leq \frac{1}{2} C_* \tau M_{p,q,\phi} .
    \end{equation}
From \eqref{Q1-estimate-ineq3}, \eqref{Q2-estimate-ineq2} and \eqref{Q3-estimate-ineq6}, the error $u(n\tau)-Z(n\tau)\phi$ in \eqref{dif_decom} is estimated to be
    $$
    (n\tau)^{\frac{d}{2}\left(\frac{1}{q}-\frac{1}{r}\right) - \left(1-\mu\right) } \| u(n\tau) - Z(n\tau)\phi \|_{L^r(\Omega)} \leq \left( C_{d,p,q} + \frac{1}{2} C_* \right) \tau M_{p,q,\phi}.
    $$
If the constant $C_*>0$ is chosen by $C_* \geq 2C_{d,p,q}$, then the error estimate \eqref{induction_assume} for $k=n$ holds true, so the proof of Theorem \ref{main_thm} is concluded.


\section{Numerical experiments}\label{sec4}

In this section, we give some numerical results based on the operator splitting method \eqref{operator-splitting-approximation} and confirm the analyzed convergence rate in Theorem \ref{main_thm}.
For $0<h<1$, let $\mathcal{T}_h$ be a family of regular partitions of $\Omega$ into disjoint triangular elements.
The finite element spaces are defined by
$$
\begin{aligned}
V_h&=\{v\in C(\overline{\Omega}):v|_K \in P_1(K)\quad\forall K\in\mathcal{T}_h\},\\
V_h^0&=\{v\in V_h:v|_{\partial\Omega}=0\}\subset H_0^1(\Omega),
\end{aligned}
$$
where $P_1(K)$ means the space of linear functions defined on $K$.
Now, we propose the numerical scheme related to \eqref{operator-splitting-approximation} as follows: For a given switching time $\tau\ll 1$, let $u_{h,\tau}^n\in V_h^0$ be the approximation of $Z(n\tau)\phi$ for $n\in\mathbb{N}$. Firstly, we simulate some numerical experiments for three types of initial conditions on a square domain, and then we confirm the $L^2$-errors of discrete solution for a singular initial function on a non-convex polygon.
Moreover, we show a certain convergence rate on a three-dimensional cubic domain.
Lastly, we give some numerical results of the proposed method in computational time, which are compared with two existing algorithms.
\begin{tcolorbox}
\underline{{\it Algorithm A}}
\begin{enumerate}[{\bf 1.}]
 \item Set $n\leftarrow 0$ and $u_{h,\tau}^0=\pi_h\phi$,
 where $\pi_h$ denotes the interpolation operator on $V_h^0$.
 \item Set $n\leftarrow n+1$. Compute
 $$
 u_{h,\tau}^{n-1/2}=u_{h,\tau}^{n-1}\left(\frac{1}{1-(p-1)\lambda\tau\left|u_{h,\tau}^{n-1}\right|^{p-1}}\right)^{\frac{1}{p-1}}.
 $$
 \item Find $u_{h,\tau}^n\in V_h^0$ such that
 \begin{equation}\label{algorithmA-step3}
 \left(\frac{u_{h,\tau}^{n}-u_{h,\tau}^{n-1/2}}{\tau},v_h\right)+\left(\nabla u_{h,\tau}^n,\nabla v_h\right)=0\qquad\forall v_h\in V_h^0.
 \end{equation}
 \item Repeat 2-3 until $0<n\tau<T_2$, where $T_2>0$ is a time given by \eqref{def_T2}.
\end{enumerate}
\end{tcolorbox}

\vspace{0.3cm}

\noindent{\bf Example 1.}
In the first experiment, we simulate {\it Algorithm A} with various initial functions in the two-dimensional space.
We try to check the rate of convergence for $u_{h,\tau}^n$ on the square domain $\Omega=(0,1)^2$ depicted by Figure \ref{fig1}.
\begin{figure}[htbp]
\begin{center}
\subfigure[The domain $\Omega$]{
\includegraphics[height=5.5cm]{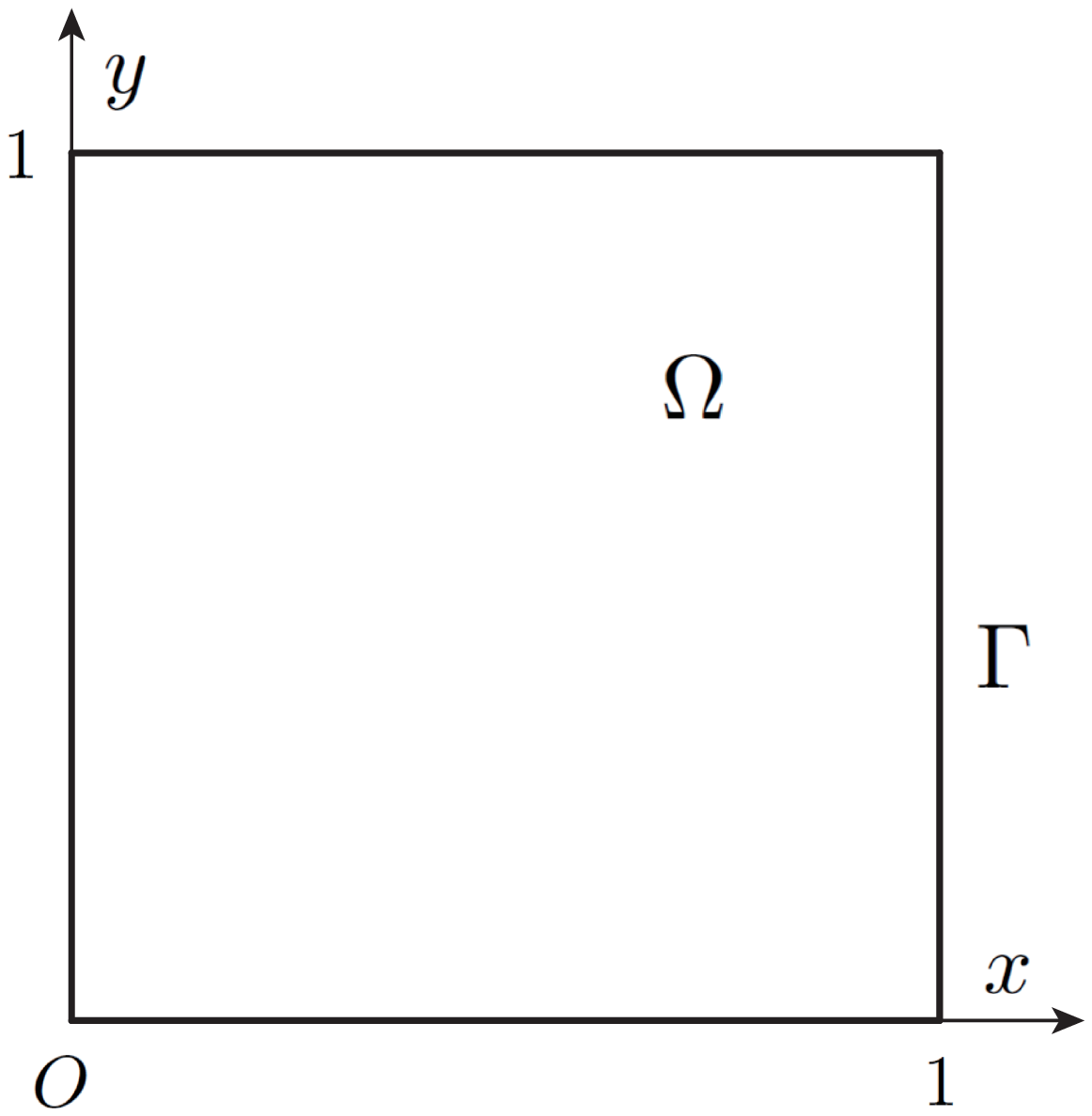}\label{ex1_1.fig}}
\qquad
\subfigure[The initial triangulation of $\Omega$]{
\includegraphics[height=5.5cm]{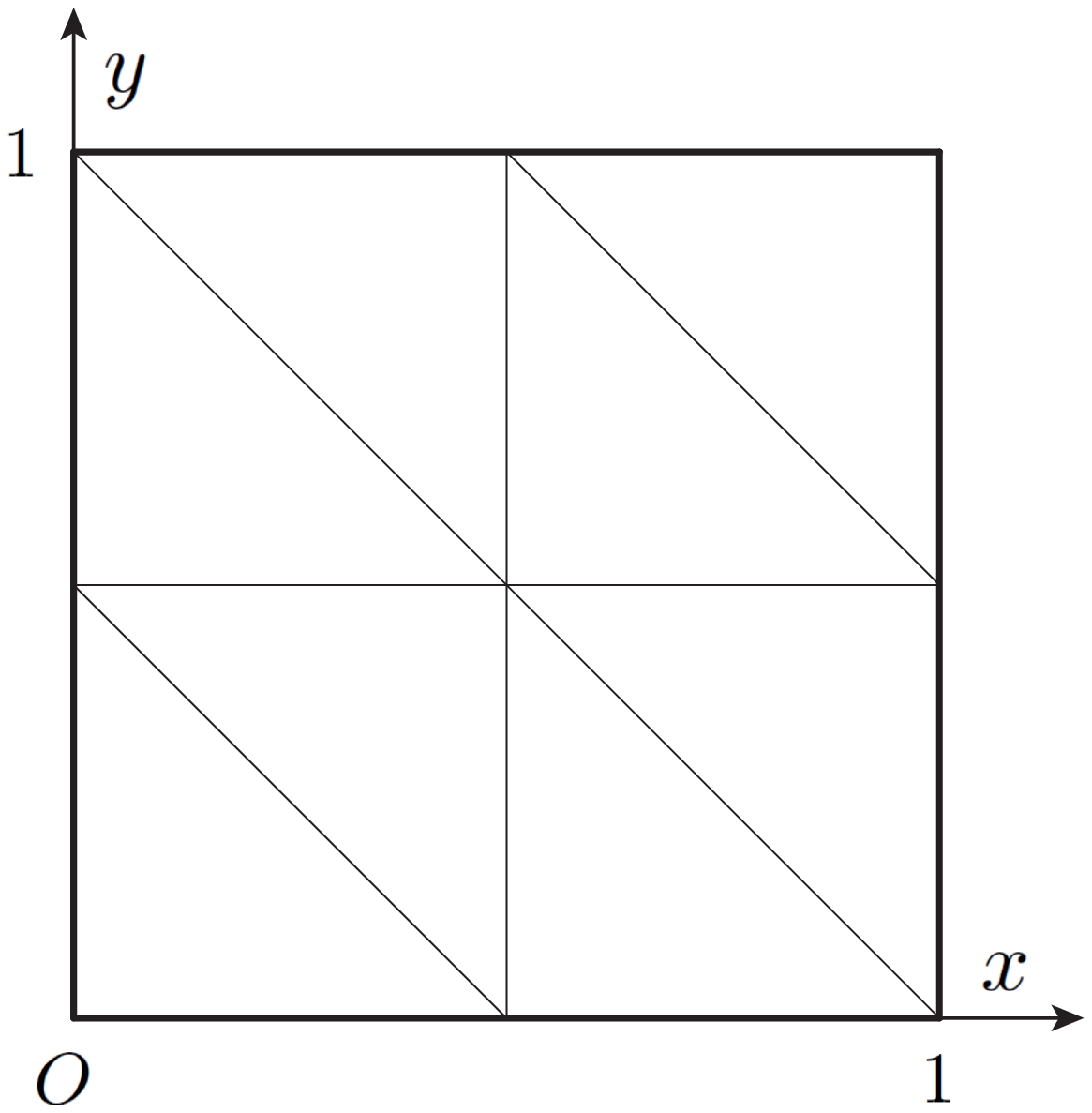}\label{ex1_2.fig}}
\end{center}
\caption{The computational domain (Example 1)}\label{fig1}
\end{figure}
Choose $p=5/2$ and $\lambda=1$ in all numerical experiments of this section.
In this example, the initial function $\phi=\phi_0$ is given by
\begin{equation}\label{example1-phi}
\phi_0:=\sin(\pi x)\sin(\pi y).
\end{equation}
Set the meshsize $h_j=2^{-j}$ on a level number $j\geq 1$ and the time step $\tau_k=2^{-k}$ on a level number $k\geq 1$.
Since the exact formula of $u$ satisfying \eqref{eq-main} can not be found, we define the sequential $L^2$-error for the time step $\tau_k$ on the fixed meshsize $h_j$ as follows:
$$
\mathcal{E}_u=\|u_{h_j,\tau_k}-u_{h_j,\tau_{k-1}}\|_{L^2(\Omega)},
$$
where $u_{h_j,\tau_k}$ denotes the discrete solution $u_{h_j,\tau_k}^{N_k}$ with $N_k:=0.1/\tau_k$, i.e., it means the approximation of $u(T)$ on the time $T=0.1$.
The convergence rate is defined by $Rate:=\log_2(e_{k-1}/e_k)$, provided that $e_k$ is the error on the $k$-th level.

\begin{table}[htbp]
\caption{The $L^2$-error $\mathcal{E}_u$ on the time $T=0.1$ by the initial function $\phi=\phi_0$}\label{tab1}
\begin{center} \footnotesize
\subtable[$h=2^{-7}$]{\label{tab1a}
\begin{tabular}{|c||c c|}
\hline
$N_k$ & \multicolumn{1}{c}{$\mathcal{E}_u$} & \multicolumn{1}{c|}{$Rate$}\\
\hline\hline
128&	1.4669E-3&	- \\
256&	7.4000E-4&	0.99 \\
512&	3.7167E-4&	0.99 \\
1024&	1.8625E-4&	1.00 \\
2048&	9.3232E-5&	1.00 \\
\hline
\end{tabular}
}
\subtable[$h=2^{-10}$]{\label{tab1b}
\begin{tabular}{|c||c c|}
\hline
$N_k$ & \multicolumn{1}{c}{$\mathcal{E}_u$} & \multicolumn{1}{c|}{$Rate$}\\
\hline\hline
128&	1.4668E-3&	- \\
256&	7.3998E-4&	0.99 \\
512&	3.7165E-4&	0.99 \\
1024&	1.8625E-4&	1.00 \\
2048&	9.3228E-5&	1.00 \\
\hline
\end{tabular}
}
\end{center}
\end{table}

On Table \ref{tab1}, we show the $L^2$-error $\mathcal{E}_u$ on the fixed meshsize $h=2^{-7}$ or $h=2^{-10}$.
If we assume that $\|Z(n\tau)\phi-u_{h,\tau}^n\|_{L^2(\Omega)}\leq C(\tau+h^2)$ which is perhaps expected by the $L^2$-error estimate for parabolic problem (cf. \cite{QVN,TG}), and by Theorem \ref{main_thm}, one sees
\begin{align}
\|u(n\tau)-u_{h,\tau}^n\|_{L^2(\Omega)}&\leq\|u(n\tau)-Z(n\tau)\phi\|_{L^2(\Omega)}+\|Z(n\tau)\phi-u_{h,\tau}^n\|_{L^2(\Omega)}\nonumber\\
&\leq C(\tau+h^2),\label{error-estimate}
\end{align}
where $C>0$ is a constant independent of $h$ and $\tau$.
From the estimate \eqref{error-estimate}, we expect that $\mathcal{E}_u=\mathcal{O}(\tau)$ for a sufficiently small $h$.
As seen in Table \ref{tab1}, it is confirmed that the $L^2$-error $\mathcal{E}_u$ has the expected convergence rate $1$ as the time step $\tau_k\rightarrow 0$.

In addition, we try to find the approximation $u_{h,\tau}^n$ obtained by {\it Algorithm A}, when the initial functions $\phi=\phi_i$ for $i=1,2$ are given by
\begin{align}
\phi_1&:=\left\{\begin{aligned}
&x\sin(\pi y)&&~~\,\mbox{ for }x\leq 1/2,\\
&(1-x)\sin(\pi y)&&~~\,\mbox{ for }x>1/2,
\end{aligned}\right.\label{example2-phi1}\\
\phi_2&:=\left\{\begin{aligned}
&x\sin(\pi y)&&\mbox{ for }x\leq 1/2,\\
&2(1-x)\sin(\pi y)&&\mbox{ for }x>1/2.
\end{aligned}\right.\label{example2-phi2}
\end{align}
\begin{table}[htbp]
\caption{The $L^2$-error $\mathcal{E}_u$ on the time $T=0.1$, obtained by $\phi=\phi_i$ for $i=1,2$}\label{tab2}
\begin{center} \footnotesize
\subtable[$h=2^{-7}$]{\label{tab2a}
\begin{tabular}{|c||c c|c c|}
\hline
$N_k$ & \multicolumn{1}{c}{$\mathcal{E}_u$ ($\phi=\phi_1$)} & \multicolumn{1}{c|}{$Rate$} & \multicolumn{1}{c}{$\mathcal{E}_u$ ($\phi=\phi_2$)} & \multicolumn{1}{c|}{$Rate$}\\
\hline\hline
128&	5.7360E-4&	- &    9.2994E-4&	- \\
256&	2.8944E-4&	0.99&  4.6872E-4&	0.99 \\
512&	1.4539E-4&	0.99&  2.3531E-4&	0.99 \\
1024&	7.2862E-5&	1.00&  1.1789E-4&	1.00 \\
2048&	3.6473E-5&	1.00&  5.9002E-5&	1.00 \\
\hline
\end{tabular}
}
\subtable[$h=2^{-10}$]{\label{tab2b}
\begin{tabular}{|c||c c|c c|}
\hline
$N_k$ & \multicolumn{1}{c}{$\mathcal{E}_u$ ($\phi=\phi_1$)} & \multicolumn{1}{c|}{$Rate$} & \multicolumn{1}{c}{$\mathcal{E}_u$ ($\phi=\phi_2$)} & \multicolumn{1}{c|}{$Rate$}\\
\hline\hline
128&	5.7355E-4&	-&     9.3508E-4&	- \\
256&	2.8941E-4&	0.99&  4.7131E-4&	0.99 \\
512&	1.4537E-4&	0.99&  2.3660E-4&	0.99 \\
1024&	7.2855E-5&	1.00&  1.1854E-4&	1.00 \\
2048&	3.6470E-5&	1.00&  5.9326E-5&	1.00 \\
\hline
\end{tabular}
}
\end{center}
\end{table}
Compared with the initial function $\phi=\phi_0$ of \eqref{example1-phi}, we notice that two functions $\phi_1$ and $\phi_2$ lose the smoothness and furthermore, the function $\phi_2$ is even discontinuous at $x=1/2$.
Nevertheless, Table \ref{tab2} describes that each convergence rate of the $L^2$-error $\mathcal{E}_u$ corresponding to the both initial functions $\phi_1$ and $\phi_2$ is identical to the predicted value $1$.

\vspace{0.3cm}

\noindent{\bf Example 2.}
In the second experiment, we give the $L^2$-errors of discrete solution obtained by {\it Algorithm A}, when the initial function has a corner singularity near a re-entrant corner.
Let $\Omega=((-1,1)\times(-1,1))\setminus([-1,0]\times[-1,0])$ be the L-shaped domain which is depicted in Figure \ref{fig2}.
\begin{figure}[htbp]
\begin{center}
\subfigure[The domain $\Omega$]{
\includegraphics[height=5.5cm]{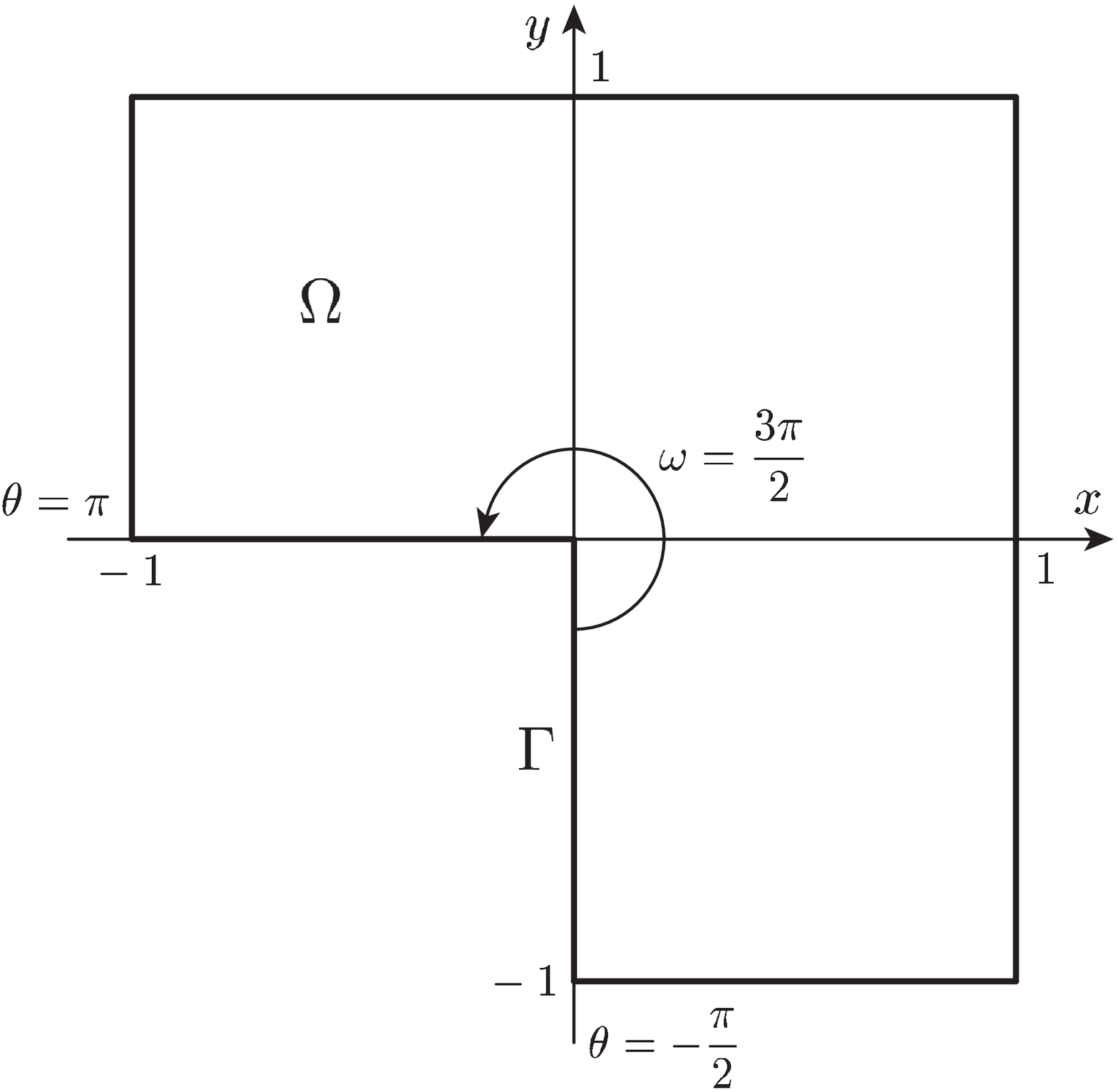}\label{ex2_1.fig}}
\qquad
\subfigure[The initial triangulation of $\Omega$]{
\includegraphics[height=5.5cm]{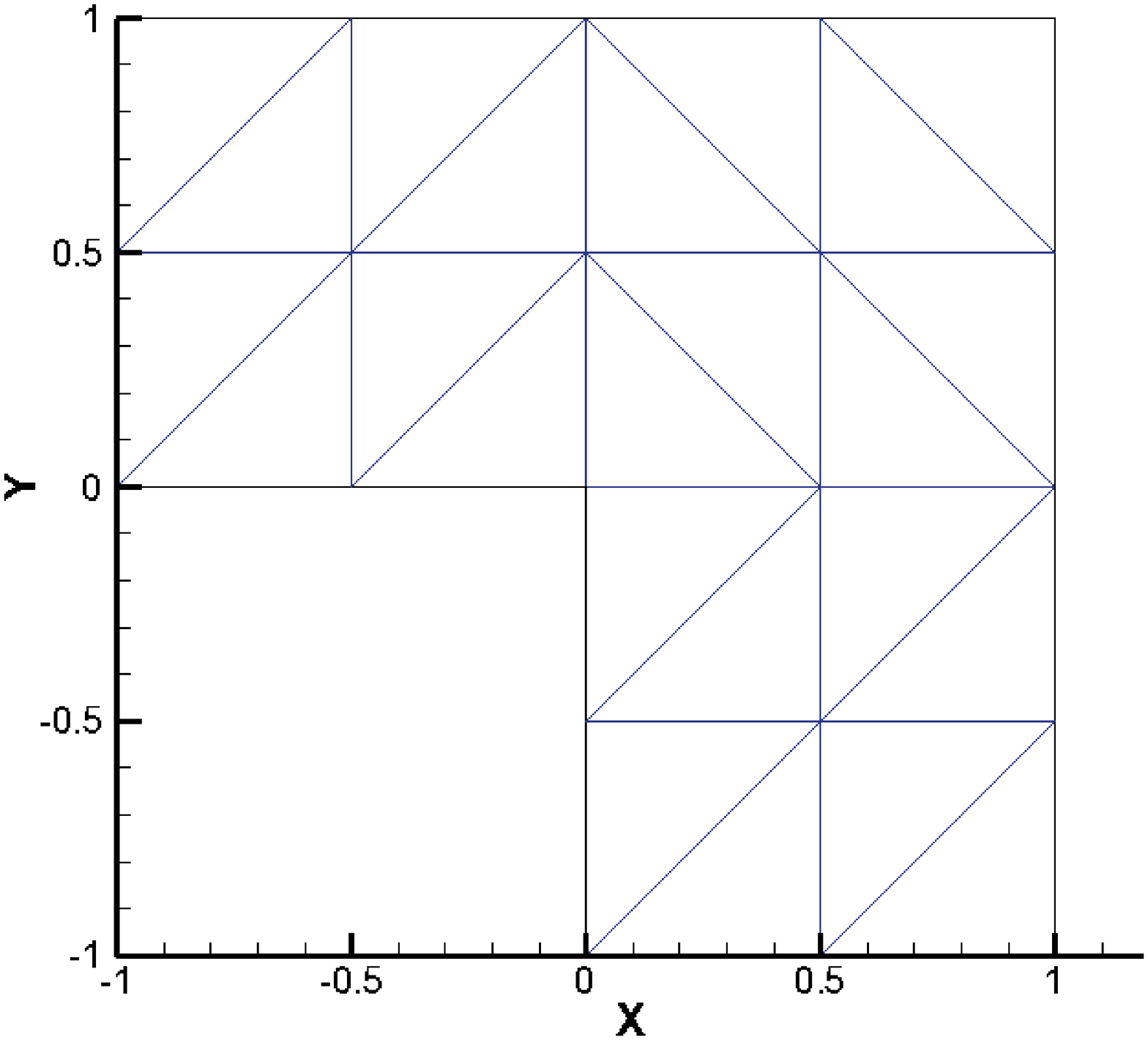}\label{ex2_2.fig}}
\end{center}
\caption{The computational domain (Example 2)}\label{fig2}
\end{figure}
The non-convex vertex is located at the origin whose opening angle is $\omega=\omega_2-\omega_1=3\pi/2$ with $\omega_1=-\pi/2$ and $\omega_2=\pi$.
Let $(r,\theta)$ be the polar coordinate with $r=\sqrt{x^2+y^2}$ and $\theta\in(\omega_1,\omega_2)$.
The cutoff function $\chi(r)\in C^2(\mathbb{R}^+)$ is defined by
$$
\chi(r)=\left\{\begin{aligned}
&1 && \mbox{ for }r\leq 1/4,\\
&-192r^5+480r^4-440r^3+180r^2-33.75r+3.375 && \mbox{ for }1/4<r<3/4,\\
&0 && \mbox{ for }r\geq 3/4.
\end{aligned}\right.
$$
With $\alpha:=\pi/\omega=2/3$, the initial function $\phi$ is set to be
\begin{equation}\label{sing-initial}
\phi=\chi(r)r^{\alpha}\sin[\alpha(\theta-\omega_1)],
\end{equation}
where it has the corner singularity of the Laplace operator with the Dirichlet boundary condition (cf. \cite{GE}) and also vanishes on the boundary $\partial\Omega$.
Actually, a direct calculation gives that
\begin{equation}\label{regularity-init}
\begin{aligned}
&\phi\in W^{1,q}(\Omega)&&\mbox{ for }~\frac{3}{2}<q<6,\\
&\phi\not\in W^{2,q}(\Omega)&&\mbox{ for }~\frac{3}{2}<q<\infty.
\end{aligned}
\end{equation}
The regularity result \eqref{regularity-init} implies that the initial condition $\phi$ in \eqref{sing-initial} guarantees Theorem \ref{main_thm}.

\begin{table}[htbp]
\caption{The $L^2$-error $\mathcal{E}_u$ for the singular initial function \eqref{sing-initial} on the L-shaped domain}\label{tab3}
\begin{center} \footnotesize
\subtable[$T=0.1$]{\label{tab3a}
\begin{tabular}{|c||c c|}
\hline
$N_k$ & \multicolumn{1}{c}{$\mathcal{E}_u$} & \multicolumn{1}{c|}{$Rate$}\\
\hline\hline
128&	6.4857E-4&	- \\
256&	3.2383E-4&	1.00 \\
512&	1.6178E-4&	1.00 \\
1024&	8.0857E-5&	1.00 \\
2048&	4.0419E-5&	1.00 \\
\hline
\end{tabular}
}
\subtable[$T=0.5$]{\label{tab3b}
\begin{tabular}{|c||c c|}
\hline
$N_k$ & \multicolumn{1}{c}{$\mathcal{E}_u$} & \multicolumn{1}{c|}{$Rate$}\\
\hline\hline
128&	1.4984E-4&	- \\
256&	7.2273E-5&	1.05 \\
512&	3.5465E-5&	1.03 \\
1024&	1.7564E-5&	1.01 \\
2048&	8.7395E-6&	1.01 \\
\hline
\end{tabular}
}
\subtable[$T=1.0$]{\label{tab3c}
\begin{tabular}{|c||c c|}
\hline
$N_k$ & \multicolumn{1}{c}{$\mathcal{E}_u$} & \multicolumn{1}{c|}{$Rate$}\\
\hline\hline
128&	2.3088E-5&	- \\
256&	1.0249E-5&	1.17 \\
512&	4.8115E-6&	1.09 \\
1024&	2.3287E-6&	1.05 \\
2048&	1.1452E-6&	1.02 \\
\hline
\end{tabular}
}
\end{center}
\end{table}
We try to check the convergence rates of the $L^2$-errors of discrete solution by {\it Algorithm A} on the L-shaped domain.
On Table \ref{tab3}, we describe the errors of $\mathcal{E}_u$ and their rates on the fixed meshsize $h=2^{-6}$ when $T=0.1$, $0.5$, and $1.0$.
Although the initial condition has a singularity near a re-entrant corner of non-convex polygon, it is shown that the convergence rates of $\mathcal{E}_u$ are almost identical to the analyzed value $1$.

\vspace{0.3cm}

\noindent{\bf Example 3.}
In the third experiment, we show the numerical results simulated by {\it Algorithm A} on the cubic domain $\Omega=(0,1)^3$ in the three-dimensional space.
Here, the triangulation of $\Omega$ consists of tetrahedrons with the meshsize $h$.
The initial data $\phi=\varphi_i$ for $i=0,1,2$ are chosen as
\begin{align}
\varphi_0&:=\sin(\pi x)\sin(\pi y)\sin(\pi z),\label{varphi0}\\
\varphi_1&:=\left\{\begin{aligned}
&x\sin(\pi y)\sin(\pi z)&&~~\,\mbox{ for }x\leq 1/2,\\
&(1-x)\sin(\pi y)\sin(\pi z)&&~~\,\mbox{ for }x>1/2,
\end{aligned}\right.\label{varphi1}\\
\varphi_2&:=\left\{\begin{aligned}
&x\sin(\pi y)\sin(\pi z)&&\mbox{ for }x\leq 1/2,\\
&2(1-x)\sin(\pi y)\sin(\pi z)&&\mbox{ for }x>1/2.
\end{aligned}\right.\label{varphi2}
\end{align}

\begin{table}[htbp]
\caption{The $L^2$-error $\mathcal{E}_u$ on the time $T=0.1$ in the cubic domain with $h=2^{-6}$}\label{tab4}
\begin{center} \footnotesize
\subtable[$\phi=\varphi_0$]{\label{tab4a}
\begin{tabular}{|c||c c|}
\hline
$N_k$ & \multicolumn{1}{c}{$\mathcal{E}_u$} & \multicolumn{1}{c|}{$Rate$}\\
\hline\hline
128&	1.3857E-3&	- \\
256&	6.9957E-4&	0.99 \\
512&	3.5148E-4&	0.99 \\
1024&	1.7617E-4&	1.00 \\
2048&	8.8192E-5&	1.00 \\
\hline
\end{tabular}
}
\subtable[$\phi=\varphi_1$]{\label{tab4b}
\begin{tabular}{|c||c c|}
\hline
$N_k$ & \multicolumn{1}{c}{$\mathcal{E}_u$} & \multicolumn{1}{c|}{$Rate$}\\
\hline\hline
128&	5.5141E-4&	- \\
256&	2.7846E-4&	0.99 \\
512&	1.3992E-4&	0.99 \\
1024&	7.0138E-5&	1.00 \\
2048&	3.5113E-5&	1.00 \\
\hline
\end{tabular}
}
\subtable[$\phi=\varphi_2$]{\label{tab4c}
\begin{tabular}{|c||c c|}
\hline
$N_k$ & \multicolumn{1}{c}{$\mathcal{E}_u$} & \multicolumn{1}{c|}{$Rate$}\\
\hline\hline
128&	8.4801E-4&	- \\
256&	4.2780E-4&	0.99 \\
512&	2.1485E-4&	0.99 \\
1024&	1.0767E-4&	1.00 \\
2048&	5.3893E-5&	1.00 \\
\hline
\end{tabular}
}
\end{center}
\end{table}

On Table \ref{tab4}, we give the $L^2$-errors on $T=0.1$ and their convergence rates in the various initial conditions $\phi=\varphi_i$ for $i=0,1,2$.
As seen in previous examples, the $L^2$-errors in the three-dimensional domain also decay to zero with the analyzed rate $1$.

\vspace{0.3cm}

\noindent{\bf Example 4.}
In the fourth experiment, we check the computational time of {\it Algorithm A} which is compared with two other types based on the implicit Euler method.
Precisely, the first comparing algorithm ({\it Type 1}\,) is given by
\begin{equation}\label{eq-4-1}
\frac{u^n -u^{n-1}}{\tau} = \Delta u^n + \lambda |u^n|^{p-1} u^n,
\end{equation}
and the second comparing algorithm ({\it Type 2}\,) is considered as
\begin{equation}\label{eq-4-2}
\frac{u^n -u^{n-1}}{\tau} = \Delta u^n + \lambda |u^{n-1}|^{p-1} u^n.
\end{equation}
Here, the nonlinearity in the equation \eqref{eq-4-1} is solved by the Picard iteration.
To check the performance of three numerical methods including {\it Algorithm A}, {\it Type 1} and {\it Type 2}, we try to measure the computation time to find the corresponding discrete solutions on $T=0.1$ with the fixed meshsize $h=2^{-7}$ by using three numerical methods in the test cases $\phi=\phi_0$, $\phi_1$ and $\phi_2$ defined in Example 1.

On Figure \ref{fig3}, we plot the graphs of the computation time for three algorithms (cf. \cite{HSI}).
As seen in each graph, we notice that {\it Algorithm A} is much faster than two other schemes in order to obtain the same $L^2$-errors.
The reason is perhaps due to the equality of stiffness matrices used in \eqref{algorithmA-step3} for each iteration $n$.

\begin{figure}[htbp]
\begin{center}
\subfigure[$\phi=\phi_0$]{
\includegraphics[height=4.4cm]{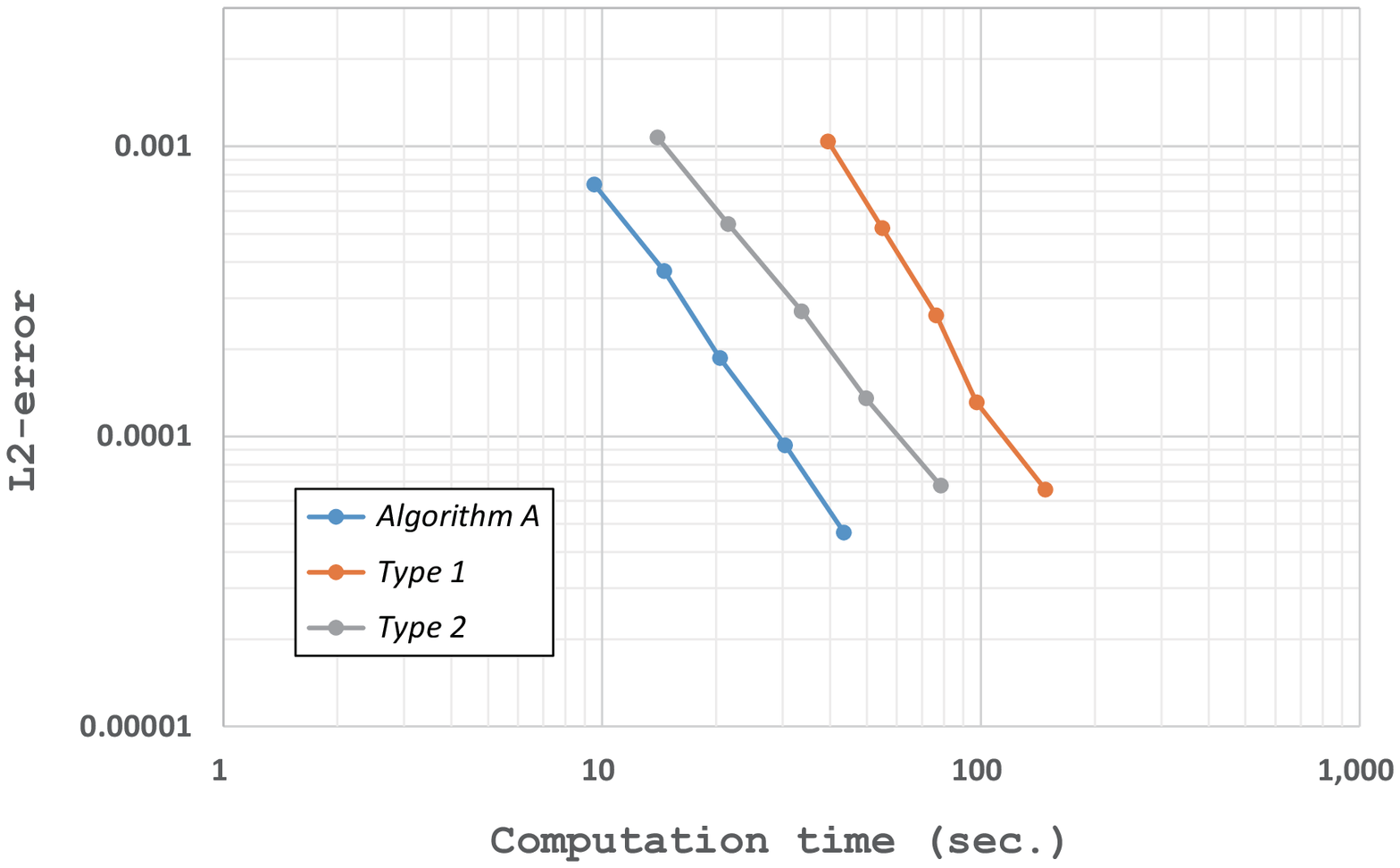}\label{comparison_ex1.fig}}
\quad
\subfigure[$\phi=\phi_1$]{
\includegraphics[height=4.4cm]{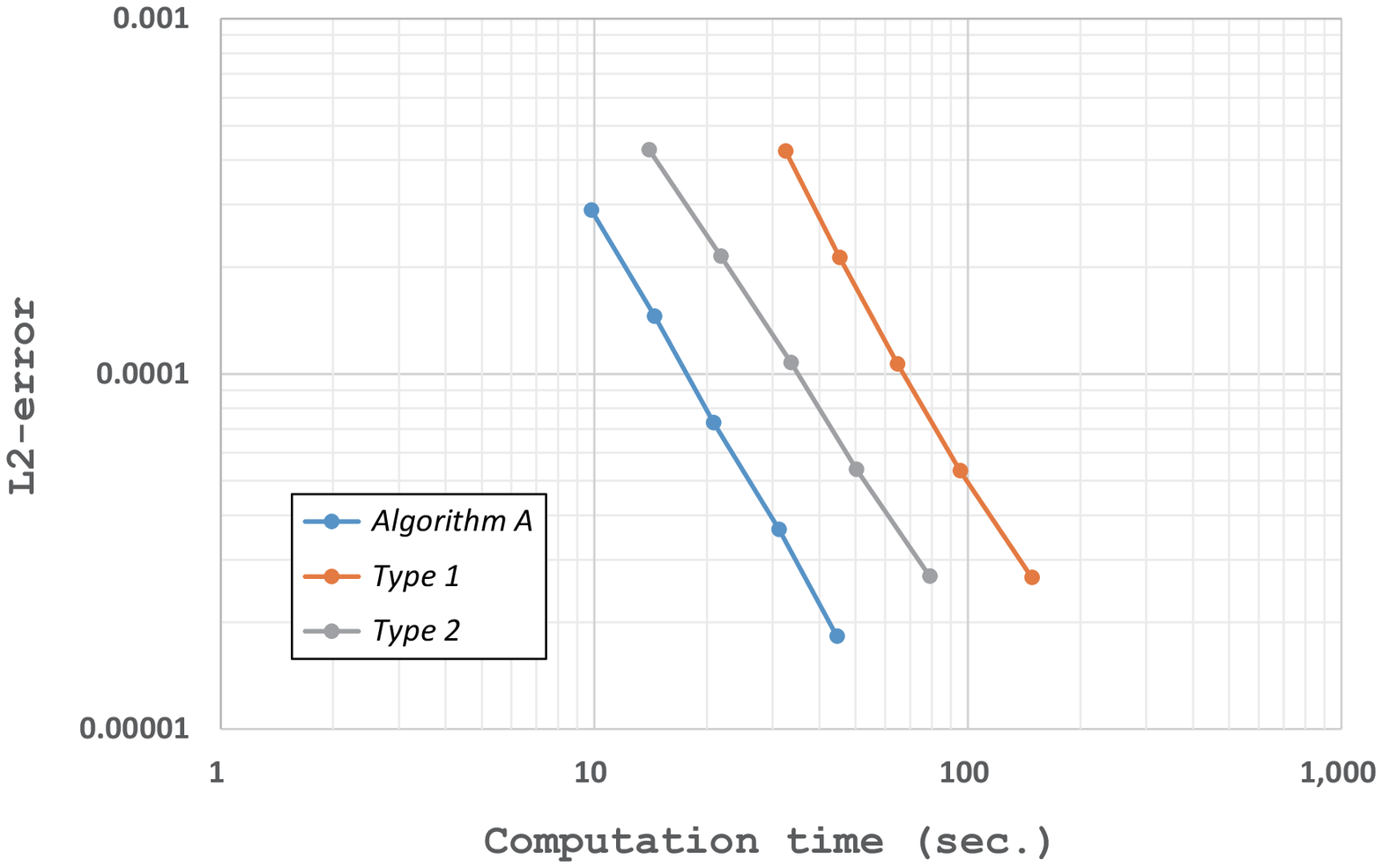}\label{comparison_ex2.fig}}
\subfigure[$\phi=\phi_2$]{
\includegraphics[height=4.4cm]{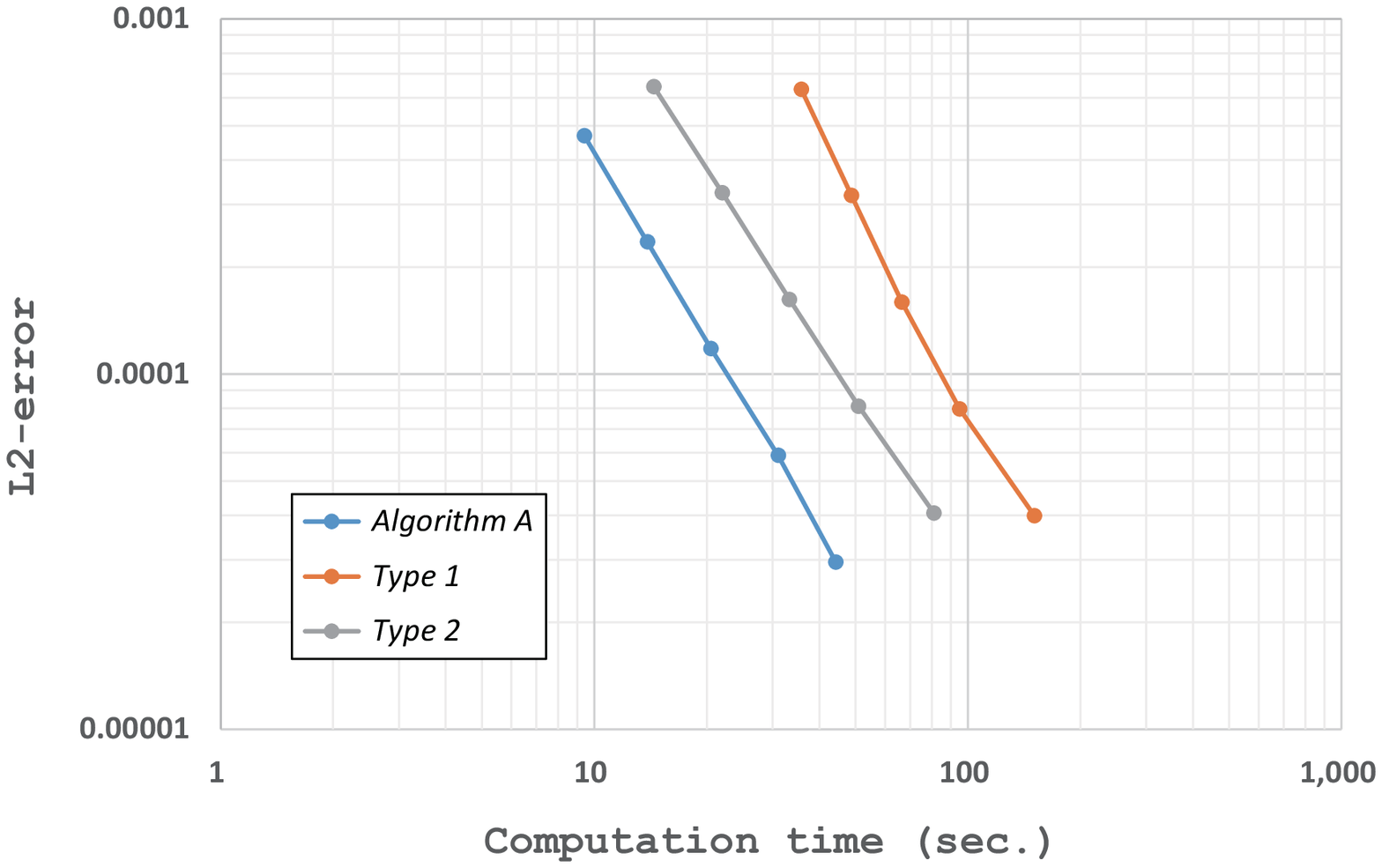}\label{comparison_ex3.fig}}
\end{center}
\vspace{-0.5cm}
\caption{Graphs related to the computation time of three numerical methods including {\it Algorithm A}, {\it Type 1} and {\it Type 2\,} for the initial conditions $\phi=\phi_0$, $\phi_1$ and $\phi_2$}\label{fig3}
\end{figure}

\section*{Conclusion}
In this paper, we analyze the convergence of the operator splitting method of the nonlinear heat equation with the Dirichlet boundary condition.
By various numerical experiments, it is confirmed that the $L^2$-error of operator splitting scheme has the analyzed convergence rate $1$ with respect to time step, and furthermore it is more effective in computation time than others.
In future works, we will extend our result to the rough initial data and develop a novel splitting scheme combined with the corner singularity expansion on a non-convex polygon (cf. \cite{CKT}).


\section*{Appendix: The well-posedness of the approximation $Z(n\tau)\phi$}

We give the proof of Theorem \ref{main_thm_2} regarding the well-posedness of $Z(n\tau)\phi$ given in \eqref{operator-splitting-approximation}.
This proof is essentially similar to the one of Theorem \ref{QS-thm} which is shown in Theorem 15.2 of \cite{QS}.

Let $d\geq1$, $p>1$ and $q\geq1$ with $\frac{d(p-1)}{2}<q<\infty$.
From Proposition \ref{prop_def_N}, $Z(n\tau)\phi$ is well-defined for $n\tau \in (0,T_2)$.
So we consider the following set:
    $$
    \Lambda(d,p,q) := \left\{ N \in \mathbb{N} ~:~
    \max_{1\leq n\leq N} (n\tau)^{\frac{d}{2} \left(\frac{1}{q}-\frac{1}{r}\right)} \|Z(n\tau)\phi\|_{L^r(\Omega)}
    \leq C_*\|\phi\|_{L^q(\Omega)}  ~\mbox{for all}~ r\in[q,\infty]~ \right\},
    $$
where $C_*:= 4\max\{C_{d,p,q},1\}$ for a constant $C_{d,p,q}>0$ used in \eqref{NLH_basic}.
To derive Theorem \ref{main_thm_2}, it is sufficient to show that
$$
\Lambda(d,p,q)\supset\{n\in\mathbb{N}:n\tau< T_2\}.
$$

\vspace{0.3cm}

\noindent\underline{\it Step 1 (Base case)}.
We first consider the estimate of $Z(\tau)\phi$.
By \eqref{linear_heat} and the explicit form \eqref{NL-flow_2} of $N(t)$, and since $\tau \leq T_2 /2 \leq T_1 /2$, we have
    $$
    \begin{aligned}
    \tau^{\frac{d}{2} \left(\frac{1}{q}-\frac{1}{r}\right)} \|Z(\tau)\phi \|_{L^r(\Omega)}
    &= \tau^{\frac{d}{2} \left(\frac{1}{q}-\frac{1}{r}\right)} \|S(\tau)N(\tau)\phi \|_{L^r(\Omega)} \\
    &\leq (4\pi)^{-\frac{d}{2} \left(\frac{1}{q}-\frac{1}{r}\right)} \|N(\tau)\phi \|_{L^q(\Omega)} \\
    &\leq 2^{\frac{1}{p-1}} \|\phi \|_{L^q(\Omega)},
    \end{aligned}
    $$
and then this inequality implies that $1 \in \Lambda(d,p,q)$.

\noindent\underline{\it Step 2 (Inductive step)}.
Let $n\in\mathbb{N}$ be given with $n\tau <T_2$.
It is assumed that $n-1 \in \Lambda(d,p,q)$.
By the expression \eqref{eq-1-9} and \eqref{linear_heat}, one yields
    \begin{align}
    (n\tau)^{\frac{d}{2} \left(\frac{1}{q}-\frac{1}{r}\right)} \|Z(n\tau) \phi \|_{L^r(\Omega)}
    &\leq(n\tau)^{\frac{d}{2} \left(\frac{1}{q}-\frac{1}{r}\right)} \left\| S(n\tau) \phi \right\|_{L^r(\Omega)}
    + (n\tau)^{\frac{d}{2} \left(\frac{1}{q}-\frac{1}{r}\right)} B(n,\tau)\nonumber\\
    &\leq C_{d,q} \|\phi\|_{L^q(\Omega)} + (n\tau)^{\frac{d}{2} \left(\frac{1}{q}-\frac{1}{r}\right)} B(n,\tau),\label{appendix-ineq1}
    \end{align}
where $C_{d,q}:=(4\pi)^{-\frac{d}{2}\left(\frac{1}{q}-\frac{1}{r}\right)}$ and
    $$
    B(n,\tau):=\tau \sum_{0 \leq k <n} \left\|
    S(n\tau - k \tau) \left( \frac{N(\tau)-I}{\tau} \right) Z(k \tau) \phi \right\|_{L^r(\Omega)}.
    $$
Again, using \eqref{linear_heat}, we have
    $$
    \begin{aligned}
    B(n,\tau)&\leq \tau \sum_{0 \leq k <n} (n\tau - k\tau)^{-\frac{d}{2} \left(\frac{1}{q}-\frac{1}{r}\right)} \left\|
    \left(\frac{N(\tau) - I}{\tau}\right) Z(k\tau) \phi \right\|_{L^q(\Omega)} \\
    &\leq \tau \sum_{0 \leq k <n} (n\tau -
    k\tau)^{-\frac{d}{2} \left(\frac{1}{q}-\frac{1}{r}\right)} \|Z(k\tau) \phi \|_{L^{pq}(\Omega)}^{p} \\
    &= \tau \sum_{ 0 \leq k < n} (n\tau - k\tau)^{-\frac{d}{2} \left(\frac{1}{q}-\frac{1}{r}\right)} (k\tau)^{-\frac{dp}{2} \left(\frac{1}{q}-\frac{1}{pq}\right)}
    \left( (k\tau)^{\frac{d}{2} \left(\frac{1}{q}-\frac{1}{pq}\right)} \|Z(k \tau) \phi \|_{L^{pq}(\Omega)} \right)^{p} \\
    &\leq \tau \sum_{ 0 \leq k < n} (n\tau - k\tau)^{-\frac{d}{2} \left(\frac{1}{q}-\frac{1}{r}\right)} (k\tau)^{-\frac{dp}{2} \left(\frac{1}{q}-\frac{1}{pq}\right)}
    \left( \max_{0\leq k <n} (k\tau)^{\frac{d}{2} \left(\frac{1}{q}-\frac{1}{pq}\right)} \|Z(k \tau) \phi \|_{L^{pq}(\Omega)} \right)^{p} \\
    &\leq C_{d,p,q} (n\tau)^{-\frac{d}{2} \left(\frac{1}{q}-\frac{1}{r}\right) -\frac{dp}{2} \left(\frac{1}{q}-\frac{1}{pq}\right) +1}
    \left( \max_{0\leq k <n} (k\tau)^{\frac{d}{2} \left(\frac{1}{q}-\frac{1}{pq}\right)} \|Z(k \tau) \phi \|_{L^{pq}(\Omega)} \right)^{p}.
    \end{aligned}
    $$
From the inductive hypothesis, one sees
    $$
    \max_{0\leq k <n} (k\tau)^{\frac{d}{2} \left(\frac{1}{q}-\frac{1}{pq}\right)} \|Z(k \tau) \phi \|_{L^{pq}(\Omega)}
    \leq C_* \|\phi\|_{L^q(\Omega)},
    $$
and then we have
    \begin{equation}\label{appendix-ineq2}
    B(n,\tau)\leq C_{d,p,q} (n\tau)^{-\frac{d}{2} \left(\frac{1}{q}-\frac{1}{r}\right) -\frac{dp}{2} \left(\frac{1}{q}-\frac{1}{pq}\right) +1} C_*^p \|\phi\|_{L^q(\Omega)}^p.
    \end{equation}
By \eqref{appendix-ineq2}, the inequality \eqref{appendix-ineq1} becomes
    \begin{equation}\label{appendix-ineq4}
    (n\tau)^{\frac{d}{2} \left(\frac{1}{q}-\frac{1}{r}\right)} \|Z(n\tau) \phi \|_{L^r(\Omega)}
    \leq C_{d,q} \|\phi\|_{L^q(\Omega)} + C_{d,p,q} (n\tau)^{ \frac{1}{q} \left(q - \frac{d(p-1)}{2}\right) }
    C_*^p \|\phi\|_{L^q(\Omega)}^p.
    \end{equation}
Since
$$
n\tau < T_2 \leq T_0 = c_{d,p,q}\left( \left( 1/~ \|\phi\|_{L^q(\Omega)}^{(p-1)q} \right)^{\frac{1}{q - \frac{d(p-1)}{2}} } \right),
$$
and if $c_{d,p,q}>0$ is a sufficiently small constant satisfying $C_{d,p,q}~ c_{d,p,q}^{ \frac{1}{q} \left(q - \frac{d(p-1)}{2}\right) } C_*^{p-1}\leq 1/2$, then it is noted that
    \begin{align}
    C_{d,p,q} (n\tau)^{ \frac{1}{q} \left(q - \frac{d(p-1)}{2}\right) } C_*^p \|\phi\|_{L^q(\Omega)}^p
    &\leq C_{d,p,q}~ c_{d,p,q}^{ \frac{1}{q} \left(q - \frac{d(p-1)}{2}\right) } C_*^p \|\phi\|_{L^q(\Omega)} \nonumber\\
    &\leq \frac{1}{2} C_* \|\phi\|_{L^q(\Omega)}.\label{appendix-ineq3}
    \end{align}
Using \eqref{appendix-ineq3}, the inequality \eqref{appendix-ineq4} becomes
    $$
    \begin{aligned}
    (n\tau)^{\frac{d}{2} \left(\frac{1}{q}-\frac{1}{r}\right)} \|Z(n\tau) \phi \|_{L^r(\Omega)}
    &\leq \left( C_{d,q} + \frac{1}{2} C_* \right) \|\phi\|_{L^q(\Omega)} \\
    &\leq C_* \|\phi\|_{L^q(\Omega)} ,
    \end{aligned}
    $$
which implies that $n \in \Lambda(d,p,q)$.
As shown in {\it Step 1} and {\it 2}, the proof of Theorem \ref{main_thm_2} is concluded.


\section*{Acknowledgements}\thispagestyle{empty}
This work was supported by the NRF (Republic of Korea) grant No. 2019R1C1C1008677 and Education and Research Promotion program of KOREATECH in 2021 (H. J. Choi).
This work was supported by the NRF (Republic of Korea) grants No. 2021R1F1A1059671 (W. Choi), and No. 2022R1F1A1061968 (Y. Koh).


\end{document}